\let\frak\mathfrak
\def\>{\relax\ifmmode\mskip.666667\thinmuskip\relax\else\kern.111111em\fi}
\def\<{\relax\ifmmode\mskip-.333333\thinmuskip\relax\else\kern-.0555556em\fi}
\def\vsk#1>{\vskip#1\baselineskip}
\def\vv#1>{\vadjust{\vsk#1>}\ignorespaces}
\def\vvn#1>{\vadjust{\nobreak\vsk#1>\nobreak}\ignorespaces}
\let\Medskip\medskip
\def\medskip{\par\Medskip}
\let\Bigskip\bigskip
\def\bigskip{\par\Bigskip}
\let\Maketitle\maketitle
\def\maketitle{\Maketitle\thispagestyle{empty}\let\maketitle\empty}
\newtheorem{thm}{Theorem}[section]
\newtheorem{cor}[thm]{Corollary}
\newtheorem{lem}[thm]{Lemma}
\numberwithin{equation}{section}
\theoremstyle{definition}
\newtheorem*{rem}{Remark}
\let\mc\mathcal
\let\nc\newcommand
\let\al\alpha
\let\la\lambda
\let\phi\varphi
\let\si\sigma
\let\om\omega
\let\Om\Omega
\let\geq\geqslant
\let\leq\leqslant
\let\on\operatorname
\let\bi\bibitem
\let\bs\boldsymbol
\def\C{{\mathbb C}}
\def\Z{{\mathbb Z}}
\def\F{{\mc F}}
\def\L{{\mc L}}
\def\+#1{^{\{#1\}}}
\def\End{\on{End}}
\def\res{\on{res}}
\def\fb{\mathfrak{b}}
\def\fg{\mathfrak{g}}
\def\fh{\mathfrak{h}}
\def\fn{\mathfrak{n}}
\def\beq{\begin{equation}}
\def\eeq{\end{equation}}
\def\be{\begin{equation*}}
\def\ee{\end{equation*}}
\nc{\bea}{\begin{eqnarray*}}
\nc{\eea}{\end{eqnarray*}}
\nc{\bean}{\begin{eqnarray}}
\nc{\eean}{\end{eqnarray}}
\nc{\Ref}[1]{{\rm(\ref{#1})}}
\def\h{{\mathfrak h}}
\def\n{{\mathfrak n}}
\def\fsl{\mathfrak{sl}}
\def\CD{{\mathcal{D}}}
\let\ga\gamma
\let\Ga\Gamma
\nc{\Il}{{\mc I_{\bs\la}}}
\nc{\bla}{{\bs\la}}
\nc{\Fla}{\F_{\bs\la}}
\nc{\tfl}{{T^*\Fla}}
\nc{\GL}{{GL_n(\C)}}
\nc{\GLC}{{GL_n(\C)\times\C^*}}
\def\ka{{\kappa}}
\def\slt{{\frak{sl}_2}}
\def\OT{{\otimes_{a=1}^nM_{m_a}}}
\def\mm{{\bs m}}
\def\lra{\longrightarrow}
\begin{document}

\hrule width0pt
\vsk->

\hrule width0pt
\vsk->

\title[BGG resolutions via configuration spaces]
{BGG resolutions via configuration spaces }

\author[M.\,Falk, V.\,Schechtman, and A.\,Varchenko]
{M.\,Falk$\>^{\star,1}$, V.\,Schechtman$\>^{*}$,
and A.\,Varchenko$\>^{\diamond,2}$}

{\let\thefootnote\relax
\footnotetext{\vsk-.8>\noindent
${\>\kern-\parindent}^1\,$Supported in part by Fulbright U.S. Scholars grant\\
${\>\kern-\parindent}^2\,$Supported in part by NSF grant DMS-1101508}

\maketitle

\begin{center}
{\it $^\star\<$Department of Mathematics and Statistics\,--\, Northern Arizona University
\\ Flagstaff, AZ 86011, USA \/}

\medskip
{\it $^*\<$ Institut de Math\'ematiques de Toulouse\,--\,  Universit\'e Paul Sabatier\\ 118 Route de Narbonne,
31062 Toulouse, France \/}

\medskip
{\it $^\diamond\<$Department of Mathematics, University of North Carolina
at Chapel Hill\\ Chapel Hill, NC 27599-3250, USA\/}
\end{center}

\medskip\medskip

{\it To the memory of I.M.Gelfand, on the occasion of his centenary
(1913 - 2013)}

\medskip

\begin{abstract}

We study the blow-ups of configuration spaces. These spaces have a structure of what we
call an Orlik - Solomon manifold; it allows us to compute the
intersection cohomology of certain flat connections with logarithmic singularities using some Aomoto type complexes of logarithmic forms.
Using this construction we realize geometrically the $\slt$ Bernstein - Gelfand - Gelfand resolution as an Aomoto complex.

\end{abstract}

{\small \tableofcontents }

\section{Introduction}

Let us discuss briefly some general perspective and motivation.

\smallskip
\noindent
{\it Localization of $\fg$-modules: two patterns.}
${}$\\
(a) {\it Localization on the flag space.}
Let $\fg$ be a complex semisimple Lie algebra, $\fh\subset \fg$  a Cartan subalgebra whence the root system $R\subset \fh^*$; fix a
base of simple roots $\Delta \subset R$ whence a decomposition
$\fg = \fn_-\oplus \fh \oplus \fn_+$. The classical
Bernstein - Gelfand - Gelfand resolution is the left resolution
of a simple finite dimensional $\fg$-module $L_\chi$ of highest
weight $\chi - \rho$ (where $\rho$ is the half-sum of the positive roots) of the form
\bean
\label{BGG}
0 \lra C_n \lra \ldots \lra C_0 \lra L_\chi \lra 0 ,
\eean
where
\bea
C_i = \oplus_{w\in W_i} M_{w\chi},
\eea
cf. \cite{BGG}.
Here $M_\lambda$ denotes the Verma module of the highest weight
$\lambda - \rho$, $W_i\subset W$ is the set of elements of the
Weyl group of length $i$.

We can pass to contragradient duals  and use the isomorphism
$L_\chi = L_\chi^*$ given by the Shapovalov form to get
a right resolution
\bean
\label{BGG*}
0 \lra L_\chi \lra C_0^* \lra \ldots \lra C_n^* \lra 0
\eean
where
$$
C^*_i = \oplus_{w\in W_i} M^*_{w\chi}.
$$
A geometric explanation of the last complex was given by
Kempf, \cite{K}, who interpreted $\Ref{BGG*}$ as a Cousin
complex connected with the filtration of the flag space
$G/B$ by unions of Schubert cells ($G$ being a semisimple group
with Lie algebra $\fg$ and $B\subset G$ the Borel subgroup
with $Lie(B) = \fb := \fh\oplus \fn_+$) . Here the $i$-th term
is interpreted as a relative cohomology space
with support in the union of Schubert cells of codimension $i$.
This geometric picture is a part of Beilinson - Bernstein
theory which says that some reasonable category of $\fg$-modules
is equivalent to a category of (twisted) $\CD$-modules over
$G/B$, \cite{BB}.
\\
(b) {\it Localization on configuration spaces.}
In a different direction, contragradient Verma modules
and irreducible representations have been realized in \cite{SV}
in certain spaces of logarithmic differential forms
on configuration spaces. This may be upgraded to an equivalence
between  some category of $\fg$-modules and some category
of $\CD$-modules over configuration spaces, cf. \cite{KS, BFS, KV}.

\smallskip
\noindent
{\it Blow-ups and their "Schubert" stratifications.}
\\
In this note we propose a construction which provides
a geometric interpretation of the resolutions similar to the BGG resolution in \Ref{BGG*}.
 The main new idea is to use
the blow-ups of hyperplane arrangements
(in our case -- the configuration arrangements)
studied in \cite{ESV, STV, BG, V, DCP}. We define some natural
stratifications   on such blow-ups which play the role of the
Schubert stratification on $G/B$. On each stratum
we consider the Aomoto complex of  logarithmic Orlik-Solomon forms;
they are subcomplexes of the de Rham complexes of standard local systems from \cite{SV}. (In fact the stratification itself depends
on a local system).

This way we get double complexes with one differential
induced by the de Rham differential and the other one being
the residue. The residue differential gives rise to BGG-like complexes.
For the trivial local system we get the complexes
considered in \cite{BG}; in our case the combinatorics
of the ``Schubert stratification" depends on the
Cartan matrix and a finite number of dominant
weights.

We illustrate this construction for  $\fg = \fsl_2$. In this case
we obtain the BGG resolutions of tensor products of
finite dimensional $\fg$-modules, and the complex associated with our double complex
calculates
the intersection cohomology of the corresponding local system.

\smallskip
In Section \ref{Double complex of a filtered manifold}, we consider a complex analytic manifold $X$,
a divisor $D\subset X$ with normal crossings and a holomorphic flat connection on $X$. We construct a complex
which calculates the cohomology of $X$ with coefficients in the local system associated with the flat connection.

In Section \ref{Logarithmic residue complex of Orlik-Solomon forms}, we define an Orlik-Solomon manifold,
a flat connection with logarithmic singularities on an Orlik-Solomon manifold, the associated finite-dimensional
Aomoto complex. Theorem \ref{thm 4} says that the Aomoto complex calculates
the cohomology of the Orlik-Solomon manifold with coefficients in the local system associated with the connection.
Theorem \ref{thm 4} is our first main result.

In Section \ref{sec resol}, we discuss the minimal resolution of singularities of an arrangement.
In Section \ref{sec weights}, we introduce  weighted Orlik-Solomon manifolds associated with weighted arrangement of hyperplanes.
In Section \ref{Highe}, we review the definition of the BGG resolution for the Lie algebra $\slt$.
In Section \ref{sec slT}, we realize geometrically the $\slt$ BGG resolution as the skew-symmetric part
of the Aomoto complex of a suitable weighted Orlik-Solomon manifold. Theorem \ref{thm skew main} is our second
main result.
In Section \ref{BGG resolution and flag forms}, we discuss the relations between the BGG resolution and the complex of flag forms.
In Section \ref{Intersec}, we discuss the relations between the BGG resolution and intersection cohomology.

\smallskip
We thank A.\,Beilinson, V.\,Ginzburg, H.\,Terao for useful discussions and the Max Planck Institute for Mathematics for hospitality.

\section{Residue complex of a filtered manifold}
\label{Double complex of a filtered manifold}

\subsection{Local system of a flat connection}

\label{conn}
Let $X$ be a smooth connected complex analytic manifold.
Given a natural number $r$, let $\nabla$ be a holomorphic flat connection on the trivial bundle
$X\times \C^r\to X$. The
 sheaf $\L$ on $X$ of flat sections of $\nabla$ is a locally constant sheaf.
If $s$ is a differential form with values in $\C^r$, we denote  $d_\L s:= \nabla s = d s + \om \wedge s$ where
$\om$ is the connection form, a differential 1-form with values in $\End(\C^r)$. We have $d_\L^2=0$.

Let $(\Omega^\bullet_X\otimes\C^r, d_\L)$ be the de Rham complex of sheaves of $\C^r$-valued holomorphic differential forms on $X$
with differential $d_\L$. The cohomology $H^\bullet(X;\L)$ of $X$ with coefficients in $\L$
is canonically isomorphic to the hypercohomology $H^\bullet(X,\Omega_X^\bullet\otimes\C^r)$.

\subsection{Residue complex of sheaves}
\label{sec sheaves}

Let $D\subset X$ be a divisor with normal crossings. Namely, we assume that $X$ is covered by charts such that in each chart $D$
 is the union of several coordinate hyperplanes or the empty set. Such
charts are called {\it linearizing}.
 We define
\bea
\mc Z=\{X=Z_0\supset D=Z_1\supset Z_2\supset \dots\}
\eea
the associated filtration of $X$ by closed subsets  as follows. A  point $x\in X$ belongs to $Z_i$ if
in a linearizing chart $x$ belongs to the intersection of $i$ distinct coordinate hyperplanes of $D$. Thus
$\on{codim}_XZ_i=i$ if $Z_i$ is  nonempty. We denote by $C_{i,j},\, j=1,2,\dots,$ connected components of
$Z_i-Z_{i+1}$. Each $C_{i,j}$ is a smooth connected complex analytic submanifold of $X$ of codimension $i$. We set $C_{0,1}=X-D$.

Let  $\Omega^l_{C_{i,j}}$ be the sheaf of  holomorphic differential $l$-forms on $C_{i,j}$.
 Let $f: C_{i,j}\hookrightarrow X$ be the natural embedding
and $f_*\Omega^l_{C_{i,j}}$ the direct image sheaf.
 We denote
\bea
\Omega^l_{X,\mc Z}\,= \,\oplus_{i,j} \,f_*\Omega^{l-2i}_{C_{i,j}}.
\eea
Let $d_\L :f_*\Omega^l_{C_{i,j}}\otimes\C^r \to f_*\Omega^{l+1}_{C_{i,j}}\otimes\C^r$ be the differential of the connection $\nabla|_{C_{i,j}}$
and
\bea
\res : f_*\Omega^l_{C_{i,j}}\otimes\C^r \to f_*\Omega^{l-1}_{C_{i+1,j'}}\otimes\C^r
\eea
 the residue map, if $C_{i+1,j'}$ lies in the closure  $\overline{C_{i,j}}$, and  the zero map otherwise.
 The map $\tilde d =d_\L + \res$ defines the complex of sheaves on $X$,
\bea
 0 \to \Omega^0_{X,\mc Z}\otimes\C^r \xrightarrow{\tilde d} \Omega^{1}_{X,\mc Z}\otimes\C^r \xrightarrow{\tilde d} \Omega^{2}_{X,\mc Z}\otimes\C^r \xrightarrow{\tilde d} \dots\,.
\eea

The natural embeddings $  \Omega^l_X\otimes\C^r \hookrightarrow \Omega^l_{C_{0,1}}\otimes\C^r \hookrightarrow \Omega^l_{X,\mc Z}\otimes\C^r$ define
an injective homomorphism of complexes
\bean
\label{1st emb}
 (\Omega^\bullet_X\otimes\C^r, d_\L) \hookrightarrow
(\Omega^\bullet_{X,\mc Z}\otimes\C^r,\tilde d).
\eean

\begin{thm}
\label{thm 1}
Homomorphism \Ref{1st emb} is a quasi-isomorphism.

\end{thm}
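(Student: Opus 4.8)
The statement is local: a morphism of bounded complexes of sheaves is a quasi-isomorphism if and only if it induces an isomorphism on the complexes of stalks at every point. So the plan is to pass to a linearizing chart, reduce to the trivial connection by a gauge transformation, and then carry out an explicit model computation. Fix $x\in X$ and a linearizing chart, so that $X=\Delta^n$ is a polydisc and $D=\{z_1\cdots z_k=0\}$ for some $0\le k\le n$. Since $\Delta^n$ is contractible and $\nabla$ is a holomorphic flat connection on the trivial bundle over \emph{all} of $\Delta^n$, there is a holomorphic gauge transformation $g:\Delta^n\to GL_r(\C)$ with $g\nabla g^{-1}=d$. Restricting $g$ to each stratum $C_{i,j}$ (where it is holomorphic and invertible), the map $s\mapsto gs$ conjugates $d_\L$ to $d$; and it commutes with the residue maps, since $\res$ is linear over functions holomorphic along the deeper stratum and $g$ is holomorphic everywhere. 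Hence $g$ gives an isomorphism of the complex $(\Omega^\bullet_{X,\mc Z}\otimes\C^r,\tilde d)$ with the corresponding complex for the trivial connection, compatible with the inclusion \Ref{1st emb}. We may therefore assume $\nabla=d$ and, working one coordinate of $\C^r$ at a time, $r=1$; the coefficient sheaf $\L$ is then the constant sheaf $\underline\C$.

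\textbf{Product structure.} Write $(\Delta^n,D)=(\Delta,\{0\})^{\times k}\times(\Delta,\varnothing)^{\times(n-k)}$. Directly from the definitions in Section~\ref{sec sheaves}, the strata of $\mc Z$ are the products of strata of the factors, and, accordingly, $(\Omega^\bullet_{X,\mc Z},\tilde d)$ is the external tensor product of the residue complexes of the factors with $\tilde d$ the total differential (for the appropriate sign convention): the de Rham differential within a stratum supplies $d_\L$, and the elementary degree-raising map of a one-dimensional factor supplies $\res$. Under this identification the holomorphic de Rham complex $(\Omega^\bullet_X,d)$ is the external tensor product of the de Rham complexes of the factors, and \Ref{1st emb} is the external tensor product of the corresponding inclusions. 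One point to verify in the bookkeeping is that the shift in $f_*\Omega^{\bullet-2i}_{C_{i,j}}$ matches the placement, in each one-dimensional factor $(\Delta,\{0\})$, of the skyscraper $\C_0$ in cohomological degree $2$: a stratum of codimension $i$ has the origin chosen in exactly $i$ of the divisor-directions, contributing $2i$ to the total degree. Since we work with sheaves of $\C$-vector spaces, $-\otimes_\C-$ is exact on stalks, so the external tensor product of quasi-isomorphisms of bounded complexes of such sheaves is again a quasi-isomorphism. It therefore suffices to prove the theorem for each factor; the factor $(\Delta,\varnothing)$ is trivial (the residue complex equals the de Rham complex), so everything comes down to $(\Delta,\{0\})$.

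\textbf{The one-dimensional case.} For $(\Delta,\{0\})$ with constant coefficients the residue complex is $0\to j_*\O_{\Delta^*}\xrightarrow{\,d\,}j_*\Omega^1_{\Delta^*}\xrightarrow{\,\res\,}\C_0\to 0$ with $\C_0$ in degree $2$ ($j:\Delta^*\hookrightarrow\Delta$ the inclusion), and I would compute its cohomology sheaves. Away from $0$ the map $j$ is a local isomorphism and $\C_0$ vanishes, so there the complex is just $(\Omega^\bullet_\Delta,d)$. At $0$: $\mathcal H^0$ is the space of germs on a punctured disc of holomorphic functions killed by $d$, i.e. the constants; $\mathcal H^1=0$ because a holomorphic $1$-form $f(z)\,dz$ on a punctured disc with $\res(f(z)\,dz)=0$ equals $dg$, with $g$ obtained by term-by-term integration of the Laurent series of $f$; and $\mathcal H^2=0$ because $\res$ is already surjective onto $\C_0$ (e.g. $\res(dz/z)=1$). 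Thus the cohomology sheaves of the residue complex are $\underline\C$ in degree $0$ and $0$ otherwise, and the inclusion $\O_\Delta\hookrightarrow j_*\O_{\Delta^*}$ is the identity on constants, hence an isomorphism on $\mathcal H^0$; so \Ref{1st emb} is a quasi-isomorphism for $(\Delta,\{0\})$, and combining with the product step proves the theorem.

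\textbf{Main obstacle.} The only genuinely substantive input is the one-dimensional residue computation just sketched (the vanishing of $\mathcal H^1$ being the crux), together with the combinatorial bookkeeping in the product step identifying the residue complex of a normal-crossings product with the tensor product of the elementary pieces and matching the $2i$-shift with the degree-$2$ placement of the skyscrapers; once these are in place the rest is formal. (An alternative is to filter $\Omega^\bullet_{X,\mc Z}$ by codimension and run the associated spectral sequence, but identifying its $E_1$-page then requires the computation of $R^\bullet j_*\underline\C$ for normal-crossings complements, which is essentially the same content in a heavier package.) The gauge reduction is routine but worth isolating, as it is what makes the coefficient sheaf constant on each chart so that the product/Künneth argument applies.
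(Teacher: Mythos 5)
Your overall strategy -- reduce to a linearizing polydisc, then carry out a model computation -- is exactly what the paper does (the paper's proof is just two sentences saying so). The Künneth decomposition of the residue complex on $(\Delta,\{0\})^{\times k}\times(\Delta,\varnothing)^{\times(n-k)}$ and the one-variable calculation of $\mathcal H^\bullet$ for $0\to j_*\O_{\Delta^*}\to j_*\Omega^1_{\Delta^*}\xrightarrow{\res}\C_0\to 0$ are a clean way to organize the bookkeeping the paper calls a ``direct calculation.''

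However, the gauge-reduction step has a genuine gap. You claim that multiplication by $g$ ``commutes with the residue maps, since $\res$ is linear over functions holomorphic along the deeper stratum.'' This is false: the residue map on the stalk at a boundary point is the coefficient of $dz/z$ in the Laurent expansion, and that operation is \emph{not} $\O_X$-linear as soon as forms with poles of order $\ge 2$ are present. For example, with $\eta=dz/z^2$ and $g=1+z$ one has $\res(g\eta)=\res(dz/z^2+dz/z)=1$, whereas $g(0)\,\res(\eta)=1\cdot 0=0$. Since $f_*\Omega^l_{C_{i,j}}$ contains sections with arbitrary-order poles along $C_{i+1,j'}$, the map $s\mapsto gs$ does \emph{not} intertwine $\res$, hence does not give an isomorphism between the residue complex for $\nabla$ and the one for $d$. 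The Künneth and one-dimensional steps depend on having first reduced to the constant local system, so as written the reduction does not go through. The spectral sequence of the codimension filtration, which you mention and set aside, does not need the gauge step at all (the $E_1$-page is $\mathcal H^\bullet(\Omega^\bullet_{C_{i,j}},\nabla)$, computable directly since $\L$ is locally constant, with $d_1=\res$), and is a cleaner path to the conclusion precisely because it sidesteps the failure of $\O_X$-linearity of the residue.
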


\begin{proof} It is enough to check this statement locally on $X$. In that case we may assume that
$X = \{ z=(z_1,\dots,z_k)\in \C^k\ |\ |z|<1\}$ and $D$ is the union of several coordinate hyperplanes in $X$. For that example,
the statement  is checked by direct calculation.
\end{proof}

\subsection{Residue complex of global sections}
\label{Double complex of global sections}

Let $\Ga(C_{i,j},\Om^l_{C_{i,j}})$ be the space of global sections of $\Omega^l_{C_{i,j}}$.
Denote
\bea
\Ga^l(X,\mc Z;\C^r)\,=\, \oplus_{i,j}\, \Ga(C_{i,j},\Omega^{l-2i}_{C_{i,j}})\otimes\C^r.
\eea
The map $\tilde d =d_\L + \res$ defines the complex of vector spaces
\bea
 0 \to \Ga^0(X,\mc Z;\C^r) \xrightarrow{\tilde d} \Ga^{1}(X,\mc Z;\C^r) \xrightarrow{\tilde d} \Ga^{2}(X,\mc Z;\C^r) \xrightarrow{\tilde d} \dots\,.
\eea

\begin{thm}
\label{2d thm}
In addition to assumptions of Sections \ref{conn} and \ref{sec sheaves}, we assume  that for any $i,j$, the manifold $C_{i,j}$ is a Stein manifold.
Then there is the natural isomorphism $H^\bullet(X;\L)\simeq H^\bullet( \Ga^\bullet(X,\mc Z;\C^r),\tilde d)$.

\end{thm}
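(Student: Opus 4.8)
The plan is to deduce this from Theorem \ref{thm 1} by taking hypercohomology and using the Stein hypothesis to collapse the hypercohomology spectral sequence. First I would recall that by Theorem \ref{thm 1} the embedding \Ref{1st emb} is a quasi-isomorphism of complexes of sheaves, so it induces an isomorphism on hypercohomology:
\bea
H^\bullet(X,\Omega^\bullet_X\otimes\C^r)\;\simeq\;H^\bullet(X,\Omega^\bullet_{X,\mc Z}\otimes\C^r).
\eea
Combined with the canonical isomorphism $H^\bullet(X;\L)\simeq H^\bullet(X,\Omega^\bullet_X\otimes\C^r)$ recalled in Section \ref{conn}, it remains to identify the right-hand hypercohomology with the cohomology of the complex $\Ga^\bullet(X,\mc Z;\C^r)$ of global sections.

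Next I would compute the hypercohomology $H^\bullet(X,\Omega^\bullet_{X,\mc Z}\otimes\C^r)$ via the second spectral sequence of the hypercohomology of a complex of sheaves, whose $E_1$-page is
\bea
E_1^{p,q}\;=\;H^q\bigl(X,\Omega^p_{X,\mc Z}\otimes\C^r\bigr)\;=\;\bigoplus_{i,j}H^q\bigl(X,f_*\Omega^{p-2i}_{C_{i,j}}\otimes\C^r\bigr),
\eea
with differential $d_1$ induced by $\tilde d$. Since $f\colon C_{i,j}\hookrightarrow X$ is a closed embedding, $f_*$ is exact and preserves cohomology, so $H^q(X,f_*\Omega^{p-2i}_{C_{i,j}}\otimes\C^r)\simeq H^q(C_{i,j},\Omega^{p-2i}_{C_{i,j}}\otimes\C^r)$. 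Here is where the Stein hypothesis enters: each $C_{i,j}$ is Stein, and $\Omega^{p-2i}_{C_{i,j}}\otimes\C^r$ is a coherent (indeed locally free) analytic sheaf, so by Cartan's Theorem B its higher cohomology vanishes, $H^q(C_{i,j},\Omega^{p-2i}_{C_{i,j}}\otimes\C^r)=0$ for $q>0$. Thus $E_1^{p,q}=0$ for $q>0$, the spectral sequence degenerates at $E_2$, and $H^\bullet(X,\Omega^\bullet_{X,\mc Z}\otimes\C^r)$ is computed by the complex of global sections $E_1^{\bullet,0}=\Ga^\bullet(X,\mc Z;\C^r)$ with differential $d_1=\tilde d$, which is exactly the claim.

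There are two points that require a little care, and I expect the identification of differentials to be the main one. First, one must check that the $E_1$-differential of the spectral sequence, which a priori is the map on $H^0$ induced by the sheaf map $\tilde d$, coincides with the map $\tilde d=d_\L+\res$ defined directly on global sections in Section \ref{Double complex of global sections}; this is a compatibility of the residue and connection sheaf maps with passage to global sections, which is immediate since $\res$ and $d_\L$ are morphisms of sheaves and $\Ga$ is a functor, but it should be stated. Second, the hypercohomology spectral sequence in the stated form requires the complex $\Omega^\bullet_{X,\mc Z}\otimes\C^r$ to be bounded (it is, since $\dim_{\C}X$ is finite and the grading is bounded) so that convergence is not an issue; if $X$ is not assumed paracompact or finite-dimensional one would replace hypercohomology by the Čech–Cartan argument, but under the standing assumptions no extra hypotheses are needed. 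A clean way to present the whole argument is to note that $\Omega^\bullet_{X,\mc Z}\otimes\C^r$ is a complex of $\Ga$-acyclic (Stein, coherent) sheaves, hence its hypercohomology is computed directly by the complex of its global sections $\Ga(X,\Omega^\bullet_{X,\mc Z}\otimes\C^r)=\Ga^\bullet(X,\mc Z;\C^r)$, and then invoke Theorem \ref{thm 1} together with Section \ref{conn} to finish.
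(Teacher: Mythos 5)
Your overall strategy — take hypercohomology of the quasi-isomorphism in Theorem \ref{thm 1} and collapse it using the Stein hypothesis — is precisely the paper's argument; the paper's terse proof says that on each Stein $C_{i,j}$ the holomorphic de Rham complex computes $H^\bullet(C_{i,j};\L)$, and that this fact plus Theorem \ref{thm 1} gives the claim. Your spectral-sequence write-up spells out what is implicit there.

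The one step that is mis-justified is the identification $H^q\bigl(X,f_*\Omega^{p-2i}_{C_{i,j}}\otimes\C^r\bigr)\simeq H^q\bigl(C_{i,j},\Omega^{p-2i}_{C_{i,j}}\otimes\C^r\bigr)$. You assert that $f\colon C_{i,j}\hookrightarrow X$ is a closed embedding, hence $f_*$ is exact; but $C_{i,j}$ is a connected component of $Z_i-Z_{i+1}$, which is only locally closed in $X$. In fact $C_{0,1}=X-D$ is open, so $f_*$ is merely left exact there, and for general $i$ one cannot invoke exactness of pushforward along a closed embedding. (For the same reason the later parenthetical ``Stein, coherent'' is off: $f_*\Omega^l_{C_{i,j}}$ need not be a coherent $\O_X$-module, since $f$ is not proper.) The statement you want is still true, but the correct reason is local: in a linearizing polydisk $V$, the intersection $V\cap C_{i,j}$ is a product of disks and punctured disks and hence Stein, so by Cartan's Theorem B $H^q(V\cap C_{i,j},\Omega^l)=0$ for $q>0$; this gives $R^qf_*\Omega^l_{C_{i,j}}=0$ for $q>0$, and the Leray spectral sequence for $f$ then degenerates to yield the desired isomorphism. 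Combined with Cartan B applied once more to the Stein manifold $C_{i,j}$ itself, your $E_1$-page computation and the degeneration argument go through, and with this correction the proof is sound and matches the paper.
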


\begin{proof} For the Stein manifold $C_{i,j}$ the complex $(\Ga(C_{i,j},\Om^\bullet_{C_{i,j}})\otimes\C^r,d_\L)$ calculates $H^\bullet(C_{i,j};\L)$. This fact and
Theorem \ref{thm 1} imply Theorem \ref{2d thm}.
\end{proof}

\section{Logarithmic residue complex of Orlik-Solomon forms}
\label{Logarithmic residue complex of Orlik-Solomon forms}

\subsection{Affine arrangements}
\label{Affine arr}

Let $\mc A=\{H_i\}_{i\in  I}$ be an affine arrangement of hyperplanes, i.e., $\{H_i\}_{i\in I}$ is a finite nonempty
collection of
distinct hyperplanes in the affine complex space $\C^k$. Denote $U = \C^k - \cup_{i\in I}H_i$.  We denote
by $\Om^l_U$ the sheaf of holomorphic $l$-forms on $U$.

For any $i \in I$, choose a degree one polynomial function $f_i$ on $\C^k$ whose zero locus equals
 $H_i$. Define $\om_i= d \log f_i = df_i/f_i \in \Ga(U,\Om^1_U)$. Given a natural number $r$, we choose matrices
$P_i \in\End(\C^r)$, $i\in I$. Denote
\bea
\om = \sum_{i\in I} \om_i\otimes P_i \ \in \ \Ga(U, \Om^1_U)\otimes \End(\C^r).
\eea

The form $\om$ defines the connection $d + \om$ on the trivial bundle $U\times \C^r \to U$. We suppose
that  $d + \om$ is flat. Let $\L$ be the sheaf on $U$ of flat sections. Then $(\Om^\bullet_U\otimes\C^r,d_\L)$
is the complex of sheaves of $\C^r$-valued holomorphic differential forms on $U$ with differential $d_\L = d+\om$.

Define finite dimensional {\it Orlik-Solomon subspaces}
$A^p(\mc A) \subset \Ga(U,\Om^p_U)$
as the $\C$-linear subspaces generated by all forms $\om_{i_1}\wedge\dots\wedge\om_{i_p}$.
 Then the exterior multiplication by $\om$ defines the complex
\bea
 0 \to A^0\otimes\C^r \xrightarrow{\om} A^1\otimes\C^r \xrightarrow{ \om} A^2\otimes\C^r \xrightarrow{\om} \dots\,
\eea
as a subcomplex of $(\Ga(U,\Om^\bullet_U\otimes\C^r),d_\L)$. We call $(A^\bullet\otimes\C^r,\om)$ the {\it Aomoto
complex} of  $(U, d+\om)$.

Let $Y$ be any smooth compactification of $\C^k$ such that $H_\infty$ is a divisor. Write
$H = H_\infty\cup(\cup_{i\in I}H_i)$.  Then $U = Y-H$. (Typical examples for $Y$
include the complex projective space $\mc P^k$, $(\mc P^1)^k$ and any toric compactification of $\C^k$.) Note that $\om$
 can be uniquely extended to be an $\End(\C^r)$-valued rational 1-form $\om$ on $Y$.

\begin{thm} [\cite{ESV, STV}]
\label{ESV thm}

Suppose $\pi : X \to  Y$ is a blow-up of $Y$
with centers in H such that 1) $X$ is nonsingular, 2) $\pi^{-1}H$ is a normal crossing divisor,  3)
none of the eigenvalues of the residue of \,$\pi^*\om$ along any component of\, $\pi^{-1}H$ is a positive integer. Then the inclusion
$(A^\bullet\otimes\C^r,\om)\hookrightarrow (\Ga(U,\Om^\bullet_U)\otimes\C^r,d_\L)$ is a quasi-isomorphism.

\end{thm}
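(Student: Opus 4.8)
The plan is to deduce Theorem~\ref{ESV thm} by combining Theorem~\ref{2d thm} with the classical Esnault--Schechtman--Viehweg result on the de Rham cohomology of logarithmic connections, using $X$ (the blow-up) as the ambient manifold and $D=\pi^{-1}H$ as the normal-crossing divisor. First I would observe that $U = Y - H = X - D$, since $\pi$ is an isomorphism away from the centers in $H$. On $X$ we have the flat connection $d_\L = d + \pi^*\om$, which by hypothesis (3) is a logarithmic connection along $D$ none of whose residue eigenvalues is a positive integer. The local system $\L$ of flat sections on $U$ is unchanged under $\pi$, so $H^\bullet(U;\L)$ is intrinsic.

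The key input is the theorem of \cite{ESV} (see also \cite{STV}): for a normal-crossing divisor $D$ in a smooth proper (or at least suitable) variety $X$ with a logarithmic flat connection whose residue eigenvalues along each component of $D$ avoid the positive integers, the inclusion of the logarithmic de Rham complex $(\Omega^\bullet_X(\log D)\otimes\C^r, d_\L)$ into $Rj_*(\Omega^\bullet_U\otimes\C^r, d_\L)$ (where $j: U\hookrightarrow X$) is a quasi-isomorphism, so the hypercohomology of the logarithmic complex computes $H^\bullet(U;\L)$. Next I would identify the global Orlik--Solomon complex $(A^\bullet\otimes\C^r,\om)$ with an appropriate model for this hypercohomology. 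The point is that the Orlik--Solomon algebra $A^\bullet(\mc A)$, pulled back to $X$, generates the logarithmic forms along $D$ in the relevant sense, and the combinatorial structure forces the hypercohomology spectral sequence to degenerate onto the finite-dimensional complex $A^\bullet\otimes\C^r$. Concretely, one uses that each stratum $C_{i,j}$ of $D$ is (the closure of) an intersection of hyperplanes, hence itself the complement-type space of a restricted/quotient arrangement, so the residue complex $\Ga^\bullet(X,\mc Z;\C^r)$ of Theorem~\ref{2d thm} is built out of Orlik--Solomon complexes of those subarrangements; assembling them via the residue differential recovers $(A^\bullet\otimes\C^r,\om)$ up to quasi-isomorphism by an inductive/Mayer--Vietoris argument on the strata.

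The main obstacle I anticipate is the passage from the sheaf-theoretic logarithmic de Rham statement to the \emph{global, finite-dimensional} Orlik--Solomon complex: one must show that taking global sections and restricting to the Orlik--Solomon subspaces loses no cohomology. This requires (i) checking that the strata $C_{i,j}$ are Stein (or that the relevant higher cohomology of the coherent logarithmic sheaves vanishes) so that Theorem~\ref{2d thm} applies, and (ii) a local computation showing that, on each linearizing chart, the inclusion of the local Orlik--Solomon forms into the full logarithmic forms is a quasi-isomorphism for the connection $d_\L$ — this is precisely where hypothesis (3) on the residue eigenvalues is used, via the standard fact that for a one-variable logarithmic connection $d + (P/z)\,dz$ with $P$ having no positive-integer eigenvalues, $\C\cdot(dz/z)\hookrightarrow \Omega^\bullet(\log\{0\})$ is a quasi-isomorphism. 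Globalizing this local statement over all strata, and bookkeeping the residue differential so that the pieces fit together into $(A^\bullet\otimes\C^r,\om)$, is the technical heart of the argument; everything else is a diagram chase using Theorems~\ref{thm 1} and~\ref{2d thm}. Since the statement is attributed to \cite{ESV, STV}, I would in fact present this as a derivation of their result within the present framework, citing the residue-eigenvalue input as the one nontrivial analytic ingredient.
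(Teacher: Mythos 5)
The paper does not prove Theorem \ref{ESV thm}: it is quoted verbatim from \cite{ESV,STV} and used as a black-box input (indeed, it is the main external ingredient in the proof of Theorem \ref{thm 4}). So there is no internal proof to compare against, and your attempt to \emph{derive} it from Theorem \ref{2d thm} inverts the logical order of the paper, where Theorem \ref{ESV thm} is applied component-by-component \emph{inside} the residue complex to obtain Theorem \ref{thm 4}, not the other way around.

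The concrete obstruction to your route is that Theorem \ref{2d thm} (and Theorem \ref{thm 1} on which it rests) requires, via Section \ref{conn}, a \emph{holomorphic} flat connection on the trivial bundle over all of $X$, i.e.\ one that is regular along every component of the divisor. The connection relevant to Theorem \ref{ESV thm} is $d+\pi^*\om$, which has first-order poles along $D=\pi^{-1}H$; hypothesis (3) constrains its residue eigenvalues but does not make it holomorphic across $D$. Thus $\L$ is a local system only on $U=X-D$, the operator $d_\L$ on the strata $C_{i,j}$ with $i\ge 1$ is not defined, and the residue complex $\Ga^\bullet(X,\mc Z;\C^r)$ of Section \ref{Double complex of a filtered manifold} does not apply. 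Even setting that aside, Theorem \ref{2d thm} computes $H^\bullet(X;\L)$, whereas Theorem \ref{ESV thm} concerns $H^\bullet(U;\L)$; the strata $C_{i,j}$ with $i\ge 1$ contribute genuinely to the residue complex and there is no mechanism by which they cancel to leave only $(A^\bullet\otimes\C^r,\om)$, which lives entirely on $U$. What \emph{does} work is the first half of your sketch — Deligne's logarithmic comparison together with the one-variable residue-eigenvalue computation — and that is essentially the content of \cite{ESV}; but the passage from there to the finite-dimensional Orlik--Solomon complex goes through Brieskorn-type arguments on $X$ itself (degeneration of the Hodge-to-de Rham spectral sequence for $\Omega^\bullet_X(\log D)$), not through the stratified residue complex of Section \ref{Double complex of a filtered manifold}.
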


\begin{rem}
Assume that the pair $(X,\om)$ satisfies conditions 1) and 2) of Theorem \ref{ESV thm} but not condition 3). Then for almost all
$\ka\in\C^\times$, the pair $(X,\om/\kappa)$ satisfies all of the conditions 1)-3) of Theorem \ref{ESV thm}.

\end{rem}

\subsection{Orlik-Solomon manifolds}
\label{sec OS manifolds}

Let $X$ be a smooth connected complex analytic manifold, $\dim X=k$.
Let $D\subset X$ be a divisor with normal crossings and
$\mc Z=\{X=Z_0\supset D=Z_1\supset Z_2\supset \dots\}$
the associated filtration of $X$ by closed subsets.  We denote by $C_{i,j},\, j=1,2,\dots,$ connected components of
$Z_i-Z_{i+1}$ and set $C_{0,1}=X-D$.

Assume that for any $C_{i,j}$ we have:
\begin{enumerate}
\item[(i)]
An affine arrangement $\mc A_{i,j} = \{H_m\}_{m \in I_{i,j}}$ in $\C^{k-i}$ with complement
$U_{i,j} = \C^{k-i}-\cup_{m\in I_{i,j}} H_m$ and an
 analytic isomorphism $\phi_{i,j} : U_{i,j}\to C_{i,j}$.

\end{enumerate}
Assume that these objects have the following property.
\begin{enumerate}
\item[(ii)]
For any $i,j$, denote by $A^\bullet(U_{i,j})$
the Orlik-Solomon  spaces of $U_{i,j}$.
Let $C_{i+1,j'}$ lie in the closure $\overline{C_{i,j}}$ and
\bea
\res : \Ga(C_{i,j},\Omega^l_{C_{i,j}}) \to \Ga(C_{i+1,j'},\Omega^{l-1}_{C_{i+1,j'}})
\eea
 the residue map. Then  the image of $A^\bullet(U_{i,j})$
under the composition $(\phi_{i+1,j'})^*\circ\res\circ ((\phi_{i,j})^{-1})^*$
lies in $A^\bullet(U_{i+1,j'})$

\end{enumerate}
We say that $(X,D)$ is an {\it Orlik-Solomon manifold } if it has charts (i) with property (ii).

The images of Orlik-Solomon  spaces $A^\bullet(U_{i,j})$
under the isomorphism $\phi_{i,j}$ give
\linebreak
finite-dimensional subspaces of  $\Ga(C_{i,j},\Om^\bullet_{C_{i,j}})$.
We call these subspaces the {\it Orlik-Solomon  spaces of} $C_{i,j}$ and denote by $A^\bullet(C_{i,j})$.

\begin{rem}
Denote by $K=\{(0,1), ...\}$ the set of all pairs $(i,j)$ appearing as indices of components $C_{i,j}$ in the
decomposition of the pair $(X,D)$. Let $K_0\subset K$ be any subset which does not contain $(0,1)$.
 Denote $C_{K_0}\subset X$ the closure of $\cup_{(i,j)\in K_0}C_{i,j}$.
Denote $X_{K_0}=X-C_{K_0}, \, D_{K_0}=D-C_{K_0}$.  Then
$X_{K_0}$ is a smooth connected complex analytic  manifold and $D_{K_0}\subset X_{K_0}$ is a divisor with
normal crossings.
If $(X,D)$ is an Orlik-Solomon manifold, then $(X_{K_0}, D_{K_0})$ has the induced structure
of an Orlik-Solomon manifold.

\end{rem}

We describe examples of Orlik-Solomon manifolds in Section \ref{sec Examples of OS}.

\subsection{Aomoto complexes}
\label{sec Aomoto bicomplex}

Assume that $(X,D)$ is an Orlik-Solomon manifold and
$\nabla=d_\L=d+\om$ is  a holomorphic flat connection on   $X\times \C^r\to X$.
We say that $\nabla$ is a {\it flat connection with logarithmic singularities on the Orlik-Solomon
manifold} if the following property holds.
\begin{enumerate}
\item[(iii)]
For any $i,j$, the induced flat connection $\nabla_{i,j}:=(\phi_{i,j})^{*}\nabla$ on $U_{i,j}$ has the form
described in Section \ref{Affine arr}. Namely, $\nabla_{i,j} = d + \om_{i,j}$, where
\bea
\om_{i,j} = \sum_{m\in I_{i,j}} \om_m \otimes P_m
\eea
for suitable matrices $P_m\in\End(\C^r)$.

\end{enumerate}

If $\nabla$ is a flat connection with logarithmic singularities on the Orlik-Solomon
manifold $(X,D)$, then the exterior multiplication by $\om$ defines a finite-dimensional
complex
\linebreak
$(A^\bullet(C_{i,j})\otimes\C^r,\om)$ as a subcomplex of
 $(\Ga(C_{i,j},\Om^\bullet_{C_{i,j}})\otimes\C^r,d_\L=d+\om)$.

We denote
\bea
A^l(X,\mc Z;\C^r)\,= \,\oplus_{i,j} \,A^{l-2i}(C_{i,j})\otimes\C^r.
\eea
The map $\om+\res$ realizes the complex
\bea
 0 \to A^0(X,\mc Z;\C^r) \xrightarrow{\om+\res} A^1(X,\mc Z;\C^r) \xrightarrow{ \om+\res} A^2(X,\mc Z;\C^r) \xrightarrow{\om+\res} \dots\,
\eea
as a subcomplex of $( \Ga^\bullet(X,\mc Z;\C^r),\tilde d)$.

\begin{thm}
\label{thm 4}
Assume that  $\nabla = d+\omega$ is a flat connection with logarithmic singularities on the Orlik-Solomon
manifold $(X,D)$.
Assume that for any $i,j$, the form $\om_{i,j}$ on $U_{i,j}$ satisfies the conditions of Theorem \ref{ESV thm}
for a suitable resolution of singularities mentioned in Theorem \ref{ESV thm}. Then the embedding
$(A^\bullet(X,\mc Z;\C^r),\om+\res)\hookrightarrow ( \Ga^\bullet(X,\mc Z;\C^r),\tilde d)$ is a quasi-isomorphism.

\end{thm}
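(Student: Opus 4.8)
The plan is to realize both sides as the total complexes of one and the same ``skew double complex'' graded by stratum depth, and then to invoke the comparison theorem for the two associated spectral sequences; the only nontrivial input will be Theorem~\ref{ESV thm}, applied one stratum at a time.

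Filter $\Ga^\bullet(X,\mc Z;\C^r)$ and $A^\bullet(X,\mc Z;\C^r)$ by stratum depth $i$: put
\be
F^p\,\Ga^\bullet(X,\mc Z;\C^r)=\bigoplus_{i\ge p,\ j}\Ga(C_{i,j},\Om^{\bullet-2i}_{C_{i,j}})\otimes\C^r,
\ee
and likewise $F^p A^\bullet(X,\mc Z;\C^r)=\bigoplus_{i\ge p,\ j}A^{\bullet-2i}(C_{i,j})\otimes\C^r$. In either differential the summand $d_\L$ (resp. $\om$) keeps $i$ fixed, while $\res$ sends the $(i,j)$-summand into the $(i+1,j')$-summand; hence $\tilde d(F^p)\subset F^p$ and the $\res$-part raises the filtration degree by exactly $1$. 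Thus $F^\bullet$ is a decreasing filtration by subcomplexes, and since $0\le i\le\dim X=k$ it is finite, so the associated spectral sequences converge. The embedding of the statement is filtration-preserving, hence induces a morphism of spectral sequences.

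Since $\res$ strictly raises the filtration degree, it contributes nothing to the $E_0$-differential, so on the $p$-th graded piece
\be
E_0^{\,p}(A)=\bigoplus_j\bigl(A^{\bullet-2p}(C_{p,j})\otimes\C^r,\,\om\bigr),\qquad
E_0^{\,p}(\Ga)=\bigoplus_j\bigl(\Ga(C_{p,j},\Om^{\bullet-2p}_{C_{p,j}})\otimes\C^r,\,d_\L\bigr),
\ee
and the morphism between them is, stratum by stratum, the inclusion of the Aomoto complex into the de Rham complex (carried over by $\phi_{p,j}$). Cohomology commutes with the direct sum over $j$, and by hypothesis each $\om_{i,j}$ satisfies the conditions of Theorem~\ref{ESV thm}; that theorem says precisely that this inclusion is a quasi-isomorphism on each stratum $C_{p,j}$. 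Therefore the induced map $E_1(A)\to E_1(\Ga)$ is an isomorphism. (Under this identification the two $E_1$-differentials are both induced by $\res$ and agree by property (ii) in the definition of an Orlik--Solomon manifold, though this compatibility is not even needed for the conclusion.)

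A morphism of spectral sequences of finite filtrations that is an isomorphism on $E_1$ is an isomorphism on $E_\infty$, hence on the abutments; this yields the asserted quasi-isomorphism $(A^\bullet(X,\mc Z;\C^r),\om+\res)\hookrightarrow(\Ga^\bullet(X,\mc Z;\C^r),\tilde d)$, and, combined with Theorem~\ref{2d thm}, identifies $H^\bullet(A^\bullet(X,\mc Z;\C^r),\om+\res)$ with $H^\bullet(X;\L)$ whenever all the $C_{i,j}$ are Stein. The one point requiring care is the bookkeeping with the two gradings — the shift by $2i$ and the fact that $\res$ moves diagonally in the bigraded picture — and the remark that the $E_0$-differential carries no residue term; I do not expect a genuine obstacle, because all of the analysis has already been absorbed into Theorem~\ref{ESV thm}.
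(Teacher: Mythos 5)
Your proposal is correct and takes essentially the same approach as the paper: the paper's one-line ``This implies Theorem~\ref{thm 4}'' is exactly the standard argument that a morphism of finitely filtered complexes which is a quasi-isomorphism on the associated graded pieces is a quasi-isomorphism, which is what your stratum-depth filtration and spectral sequence comparison make explicit (with Theorem~\ref{ESV thm} supplying the $E_1$-isomorphism stratum by stratum).
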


\begin{proof}

By Theorem \ref{ESV thm}, the embedding
$(A^\bullet(C_{i,j})\otimes\C^r, \om)\hookrightarrow (\Ga(C_{i,j},\Om^\bullet_{C_{i,j}})\otimes \C^r,d_\L)$
is a quasi-isomorphism. This implies Theorem \ref{thm 4}.
\end{proof}

\begin{cor}
\label{cor generic}
Assume that  $\nabla = d+\omega$ is a flat connection with logarithmic singularities on the Orlik-Solomon
manifold $(X,D)$.
For  $\ka\in\C^\times$, consider the flat connection $\nabla_\ka = d+\om/\ka$ and the associated
embedding $(A^\bullet(X,\mc Z;\C^r),\om/\ka+\res)\hookrightarrow ( \Ga^\bullet(X,\mc Z;\C^r),d+\om/\ka+\res)$.
Then for generic $\ka$ this embedding is a quasi-isomorphism.

\end{cor}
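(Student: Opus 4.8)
The plan is to reduce Corollary \ref{cor generic} to Theorem \ref{thm 4} by observing that rescaling the connection form by $\ka$ does not change the underlying combinatorial data. First I would note that for each chart $(i,j)$ the induced connection on $U_{i,j}$ is $\nabla_{i,j} = d + \om_{i,j}$ with $\om_{i,j} = \sum_{m\in I_{i,j}}\om_m\otimes P_m$, so that $(\phi_{i,j})^*\nabla_\ka = d + \om_{i,j}/\ka = d + \sum_{m\in I_{i,j}}\om_m\otimes(P_m/\ka)$. Thus $\nabla_\ka$ is again a flat connection with logarithmic singularities on the same Orlik-Solomon manifold $(X,D)$, with the same affine arrangements $\mc A_{i,j}$ and the same Orlik-Solomon subspaces $A^\bullet(C_{i,j})$; only the residue matrices have been scaled. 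Since the Orlik-Solomon spaces and the residue maps between them are defined purely in terms of the forms $\om_m$ (not the matrices $P_m$), property (ii) persists and $(A^\bullet(X,\mc Z;\C^r),\om/\ka+\res)$ is genuinely a subcomplex of $(\Ga^\bullet(X,\mc Z;\C^r),d+\om/\ka+\res)$.

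Next I would verify the hypothesis of Theorem \ref{ESV thm} for $\om_{i,j}/\ka$. Fix, for each $(i,j)$, a smooth compactification $Y_{i,j}$ of $\C^{k-i}$ and a blow-up $\pi_{i,j}\colon X_{i,j}\to Y_{i,j}$ satisfying conditions 1) and 2) of Theorem \ref{ESV thm} (these exist by \cite{ESV, STV} and depend only on the arrangement, not on the matrices). The residue of $\pi_{i,j}^*(\om_{i,j}/\ka)$ along a component of $\pi_{i,j}^{-1}H$ equals $1/\ka$ times the residue of $\pi_{i,j}^*\om_{i,j}$ along that component; the latter is a fixed matrix $R$, and the eigenvalues of $R/\ka$ are the eigenvalues of $R$ divided by $\ka$. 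For each such component and each eigenvalue $\mu$ of the corresponding $R$, the set of $\ka\in\C^\times$ with $\mu/\ka\in\Z_{>0}$ is either empty (if $\mu=0$ or $\mu\notin\Q_{>0}\cdot$ something — more precisely if $\mu$ is not a positive real multiple of a positive integer) or the set $\{\mu/n : n\in\Z_{>0}\}$, which is countable with no accumulation point in $\C^\times$. Since the number of components of $\bigcup_{i,j}\pi_{i,j}^{-1}H$ is finite and each has finitely many residue eigenvalues, the union of these bad sets is a countable subset $S\subset\C^\times$ (in fact, contained in finitely many sequences converging to $0$). For every $\ka\in\C^\times\setminus S$, each $\om_{i,j}/\ka$ satisfies condition 3) of Theorem \ref{ESV thm} as well, hence all three conditions.

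Finally, for such generic $\ka$, Theorem \ref{thm 4} applied to the flat connection $\nabla_\ka = d+\om/\ka$ yields that the embedding $(A^\bullet(X,\mc Z;\C^r),\om/\ka+\res)\hookrightarrow(\Ga^\bullet(X,\mc Z;\C^r),d+\om/\ka+\res)$ is a quasi-isomorphism, which is the assertion of Corollary \ref{cor generic}. The only genuinely new point beyond bookkeeping is that scaling is the same as scaling each chart's arrangement connection, so I expect the main (minor) obstacle to be the verification that the resolution data of Theorem \ref{ESV thm} can be chosen independently of $\ka$ and that the finitely-many-components count makes the bad locus countable; this is exactly the content of the Remark following Theorem \ref{ESV thm}, applied chart by chart. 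Alternatively, one could simply invoke that Remark directly: conditions 1) and 2) hold for all $\ka$, and for almost all $\ka$ condition 3) holds, so Theorem \ref{thm 4} applies for almost all $\ka$.
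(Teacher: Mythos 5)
Your proposal is correct and follows the paper's intended route: observe that rescaling the connection by $\ka$ leaves the arrangements, the Orlik-Solomon spaces, and the residue maps unchanged (so $\nabla_\ka$ is still a flat connection with logarithmic singularities on the same Orlik-Solomon manifold), then apply the Remark after Theorem~\ref{ESV thm} chart by chart to secure condition~3) for all but a countable set of $\ka$, and finally invoke Theorem~\ref{thm 4}. The paper leaves Corollary~\ref{cor generic} without a written proof precisely because it is this immediate combination; the only small blemish in your write-up is the garbled parenthetical about when the bad set is empty versus $\{\mu/n : n\in\Z_{>0}\}$, but it does not affect the countability conclusion or the argument.
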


\section{Resolution of singularities of arrangements}
\label{sec resol}

\subsection{Minimal resolution of a hyperplane-like divisor}
Let $Y$ be a smooth connected complex analytic manifold and
$H$ a divisor. The divisor $H$ is {\it hyperplane-like}
if $Y$ can be covered by coordinate charts such that in each chart
$H$ is the union of hyperplanes. Such charts  are called {\it linearizing}.

Let $H$ be a hyperplane-like divisor, $V$
a linearizing chart. A {\it local edge} of $H$ in $V$ is
any nonempty irreducible intersection in $V$ of hyperplanes of $H$ in $V$.
A local edge  is {\it dense}  if the subarrangement of all hyperplanes in $V$ containing
the edge is irreducible: the hyperplanes cannot be partitioned into nonempty
sets so that, after a change of coordinates, hyperplanes in different
sets are in different coordinates. In particular, each hyperplane is a dense edge.
An {\it edge} of $H$ is the maximal analytic continuation in $Y$ of a local edge. An edge is called {\it dense} if it is locally dense. Any edge is an immersed submanifold in $Y$.
The irreducible components of $H$ are considered to be dense.

Let $H\subset Y$ be a hyperplane-like divisor.
Let $\pi : \tilde Y\to Y$ be the minimal resolution of singularities of $H$ in $Y$.
The minimal resolution is constructed by first blowing-up dense vertices of $H$,
then by blowing-up the proper preimages of dense
one-dimensional edges of $H$ and so on, see \cite{V, STV}.

\medskip

We have two basic examples of pairs $(Y,H)$.

\subsubsection{ Projective arrangement.}
\label{sec proj}
Let $\mc A =\{H_l\}_{l\in  I}$ be a  nonempty finite collection of
distinct hyperplanes in the complex projective space $\mc P^k$.
 Denote $H=\cup_{l\in I}H_l$.  Then $H\subset \mc P^k$ is a { hyperplane-like divisor}.
 Denote $U=\mc P^k-H$.

 For any $l,m\in I$,  we have $H_l-H_m=\on{div}(f_{l,m})$ for some rational function
 $f_{l,m}$ on $\mc P^k$. Define $\om_{l,m} =d\log f_{l,m}$. For $1\leq p\leq k$, we define the Orlik-Solomon space
 $A^p(U)$ as the $\C$-linear span of $\om_{l_1,m_1}\wedge\dots\wedge\om_{l_p,m_p}$.

 Given a natural number $r$, we choose matrices $P_l\in \End(\C^r)$, $l\in I$, such that $\sum_lP_l=0$.
 Fix $m\in I$ and define
 \bea
 \om =\sum_{l\in I}\om_{l,m}\otimes P_l.
 \eea
 The form $\om$ defines the connection $d+\om$ on $U\times \C^r\to U$. We call $d+\om$
a  {\it connection with logarithmic singularities}  on the complement  of the projective arrangement.

\subsubsection{ Discriminantal arrangement.}
\label{sec discr}

Let $Y=(\mc P^1)^k$. For $l=1,\dots,k$, we fix an affine coordinate $t_l$
on  the $l$-th factor of $Y$. For $1\leq l<m\leq k$, the subset
$H_{l,m}\subset Y$ defined by the equation $t_l-t_m=0$
is called a {\it diagonal} hyperplane. For $l,\,1\leq l\leq k$ and $z\in \C\cup\{\infty\}$,
the subset $H_{l}(z)\subset Y$ defined by the equation $t_l-z=0$
is called a {\it coordinate} hyperplane. If $z\in\C$ (resp. $z=\infty$), we call the coordinate hyperplane {\it finite}
(resp. {\it infinite}).

 A {\it discriminantal} arrangement in $Y$ is a finite collection
of diagonal and coordinate hyperplanes, which includes all infinite coordinate hyperplanes $H_l(\infty), l=1,\dots,k$, see \cite{SV}.
Define by $H$  the union of all of the hyperplanes of the arrangement. Then $H\subset Y$ is a hyperplane-like divisor.
Denote $U=Y-H$.

To every diagonal hyperplane $H_{l,m}$ we assign the 1-form $\om_{H_{l,m}}=d\log (t_l-t_m)$.
To every finite coordinate hyperplane $H_{l}(z)$  we assign the 1-form $\om_{H_l(z)}=d\log (t_l-z)$.
These are holomorphic forms on $U$. We define the Orlik-Solomon spaces
 $A^\bullet (U)$ as the graded components of the exterior $\C$-algebra generated by the 1-forms associated with
 the diagonal and finite coordinate hyperplanes.

Fix a natural number $r$. For any diagonal or finite coordinate hyperplane $H$ of the arrangement we
choose a matrix  $P_H\in \End(\C^r)$. Define
 \bea
 \om =\sum \om_H\otimes P_H,
 \eea
 where the sum is over all diagonal and finite coordinate hyperplanes of the discriminantal arrangement.
 This form $\om$ defines the connection $d+\om$ on the trivial bundle $U\times \C^r\to U$.
 We call $d+\om$
a  {\it connection with logarithmic singularities}  on  the complement  of the discriminantal arrangement.

\subsection{Examples of Orlik-Solomon manifolds}
\label{sec Examples of OS}
\subsubsection{Minimal resolution of singularities of a projective arrangement}
\label{res of proj}

Let $\mc A =\{H_l\}_{l\in  I}$ be a  projective arrangement of hyperplanes in $\mc P^k$.
 Denote $Y=\mc P^k$ and $H=\cup_{l\in I}H_l$.
 Let $\pi : \tilde Y\to Y$ be the minimal resolution of singularities of $H$ in $Y$
 and $\tilde H=\pi^{-1}H$. Then $\tilde H\subset \tilde Y$ is a divisor with normal crossings.
For the pair $(\tilde Y, \tilde H)$, we introduce components $C_{i,j} \subset \tilde Y$ as in Section
\ref{Double complex of a filtered manifold}.
It is clear from the construction of the minimal resolution that each $C_{i,j}$ is naturally isomorphic
to the complement of an affine arrangement and these isomorphisms have property (ii) of Section
\ref{sec OS manifolds}. Thus $(\tilde Y,\tilde H)$ has the {\it natural structure} of an Orlik-Solomon manifold.

\subsubsection{Minimal resolution of singularities of a discriminantal arrangement}
\label{res discr}

Let $\mc A =\{H_l\}_{l\in  I}$ be a discriminantal arrangement of hyperplanes in $(\mc P^1)^k$.
 Denote $Y=(\mc P^1)^k$ and $H=\cup_{l\in I}H_l$.
 Let $\pi : \tilde Y\to Y$ be the minimal resolution of singularities of $H$ in $Y$
 and $\tilde H=\pi^{-1}H$. Then $\tilde H\subset \tilde Y$ is a divisor with normal crossings.
For the pair $(\tilde Y, \tilde H)$, we introduce components $C_{i,j} \subset \tilde Y$ as in Section
\ref{Double complex of a filtered manifold}.
It is clear from the construction of the minimal resolution that each $C_{i,j}$ is natually isomorphic
to the complement of an affine arrangement and these isomorphisms have property (ii) of Section
\ref{sec OS manifolds}. Thus $(\tilde Y,\tilde H)$ has the {\it natural structure} of an Orlik-Solomon manifold.

\section{Weighted arrangements}
\label{sec weights}

\subsection{Weighted projective arrangement}
\label{Weighted projective arrangement}

Let $\mc A =\{H_l\}_{l\in  I}$ be a  projective arrangement of hyperplanes in $Y=\mc P^k$.
 Denote $H=\cup_{l\in I}H_l$,\  $U=Y-H$.

The arrangement $\mc A$ is {\it weighted} if a map $a: I\to \C,\ l\mapsto a_l,$ is given
such that $\sum_{l\in I}a_l =0$. The number $a_l$ is called the {\it weight} of $H_l$.
Let $X_\al$ be an edge of $\mc A$. Denote $I_\al = \{l\in I\ |\ H_l\supset X_\al\}$.
The number $a_\al = \sum_{l\in I_\al}a_l$ is called the {\it weight } of $X_\al$.
The edge $X_\al$ is  {\it resonant} if $a_\al=0$.

 Fix $m\in I$ and define
 \bea
 \om_a =\sum_{l\in I}\om_{l,m}\otimes a_l,
 \eea
 see Section \ref{sec proj}.
 The form $\om_a$ defines  the flat connection $d+\om_a$ on $U\times \C\to U$. We call $d+\om_a$
the  {\it connection associated with weights} $a$.

Let $\pi :\tilde Y\to Y$ be the minimal resolution of singularities of $H$. Denote $\tilde H=\pi^{-1}H$.
The irreducible components of $\tilde H$ are labeled by dense edges $X_\al$ of $H$. Such a component will be denoted by
$\tilde H_\al$.
Consider $(\tilde Y, \tilde H)$ with its natural structure of an Orlik-Solomon manifold, see Section \ref{res of proj}.

Denote $\tilde \om_a = \pi^{*}\om_a$. The form $\tilde \om_a$ is regular on an irreducible component of $\tilde H$
if and only if the corresponding dense edge of $H$ is resonant.

Let $J$ be  the set of all nonresonant dense edges of $H$ and $\tilde J$ any set of dense edges such that $J\subseteq \tilde J$.
Denote $\tilde H_{\tilde J}=\cup_{X_\al \in \tilde J} \tilde H_\al$, $X=\tilde Y - \tilde H_{\tilde J}$, $D=\tilde H - \tilde H_{\tilde J}$.
Then $(X,D)$ is the Orlik-Solomon manifold with respect to the structure induced from $(\tilde Y, \tilde H)$, see
 Section \ref{sec OS manifolds}. The form
 $\tilde \om _a$ is regular on $X$ and $d+\tilde \om_a$ is a flat connection with logarithmic singularities on the Orlik-Solomon manifold $(X,D)$.
 Thus we may construct the associated complex $(A^\bullet(X,\mc Z),\tilde\om_a+\res)$ and
apply  Theorem \ref{thm 4} and Corollary \ref{cor generic} to the triple $(X,D, d+\tilde\om_a)$.
The complex $(A^\bullet(X,\mc Z),\tilde\om_a+\res)$ will be called the {\it Aomoto complex of the weighted Orlik-Solomon manifold} $(X,D)$.

\subsection{Weighted discriminantal arrangement}
\label{Weighted discr arrangement}

Let $\mc A =\{H_l\}_{l\in  I}$ be a discriminantal arrangement of hyperplanes in $Y=(\mc P^1)^k$.
 Denote  $H=\cup_{l\in I}H_l$, $U=Y-H$.

According to the definition in Section \ref{sec discr}, the discriminantal arrangement
 contains the
infinite coordinate hyperplanes $H_p(\infty), p=1,\dots, k$. Let $I_{\rm fin}\subset I$
be the set of indices of the remaining hyperplanes of $\mc A$.

The discriminantal arrangement $\mc A$ is {\it weighted} if a map $a: I_{\rm fin}\to \C,\ l\mapsto a_l,$ is given.
 The number $a_l$ is  the {\it weight} of $H_l, l\in I_{\rm fin}$. We also write $a(H_l):= a_l$.

We extend this map to the map $a:I\to \C$ as follows. We set the weight of an
infinite coordinate hyperplane $H_p(\infty)$ to be the number $-\sum a_q$ where
the sum is over all $q\in I_{\rm fin}$ such that $H_q$ is of the form $t_p-t_i=0$ for some $i$ or of the form
$t_p-z=0$ for some $z\in\C$.

Let $X_\al$ be an edge of $\mc A$. Denote $I_\al = \{l\in I\ |\ H_l\supset X_\al\}$.
The number $a_\al = \sum_{l\in I_\al}a_l$ is  the {\it weight } of $X_\al$.
The edge $X_\al$ is  {\it resonant} if $a(X_\al)=0$.

We define
 \bea
 \om_a =\sum_{l\in I_{\rm fin}}\om_{H_l}\otimes a_l,
 \eea
 see Section \ref{sec discr}.
 The form $\om_a$ defines the flat connection $d+\om_a$ on $U\times \C\to U$. We call $d+\om_a$
the  {\it connection associated with weights} $a$.

Let $\pi :\tilde Y\to Y$ be the minimal resolution of singularities of $H$. Denote $\tilde H=\pi^{-1}H$.
The irreducible components of $\tilde H$ are labeled by dense edges $X_\al$ of $H$. Such a component component will be denoted by
$\tilde H_\al$.
Consider $(\tilde Y, \tilde H)$ as the Orlik-Solomon manifold, see Section \ref{res discr}.

Denote $\tilde \om_a = \pi^{*}\om_a$. The form $\tilde \om_a$ is regular on an irreducible component of $\tilde H$
if and only if the corresponding dense edge of $H$ is resonant.

Let $J$ be the  set of all nonresonant dense edges of $H$ and $\tilde J$ any subset of dense edges such that $J\subseteq\tilde J$.
Denote $\tilde H_{\tilde J}=\cup_{X_\al \in \tilde J} \tilde H_\al$, $X=\tilde Y - \tilde H_{\tilde J}$, $D=\tilde H - \tilde H_{\tilde J}$.
Then $(X,D)$ is the Orlik-Solomon manifold with respect to the structure induced form $(\tilde Y, \tilde H)$, see
 Section \ref{sec OS manifolds}. The form
 $\tilde \om _a$ is regular on $X$ and $d+\tilde \om_a$ is a flat connection with logarithmic singularities on the Orlik-Solomon manifold $(X,D)$.
 Thus we may construct the associated complex $A^\bullet(X,\mc Z,\tilde\om_a+\res)$ and
apply  Theorem \ref{thm 4} and Corollary \ref{cor generic} to the triple $(X,D, d+\tilde\om_a)$.
The complex $A^\bullet(X,\mc Z,\tilde\om_a+\res)$ will be called the {\it Aomoto complex of the weighted Orlik-Solomon manifold} $(X,D)$.

\section{Highest weight representations of $\slt$}
\label{Highe}

\subsection{Modules}
\label{slt sec}

Consider the complex Lie algebra $\slt$ with standard basis $e,f,h$ such that
$[e,f]=h, [h,e]=2e, [h,f]=-2f$. We have $\slt=\n_-\oplus\h\oplus\n_+$,
where $\n_-=\C f, \h=\C h,$
\linebreak
$\n_+=\C e$.

Let $V$ be an $\slt$-module. For $\la\in \C$, let
$V[\la]=\{v\in V\ |\ hv=\la v\}$ be the subspace of weight $\la$.
Assume that $V$ has weight decomposition $V=\oplus_\la V[\la]$ with finite-dimensional spaces $V[\la]$. Then
the {\it restricted dual} of $V$ is $V^* := \oplus_\la V[\la]^*$.
The restricted dual has the {\it contragradient} $\slt$-module structure: for $\phi \in V^*$,
we have $\langle e\phi, v\rangle = \langle \phi, fv\rangle$,
$\langle f\phi, v\rangle = \langle \phi, ev\rangle$,
$\langle h\phi, v\rangle = \langle \phi, hv\rangle$.  We have $V[\la]^*=V^*[\la]$ for any $\la$.

For the Lie algebra $\n_-$ and a module $V$ we denote $C_\bullet (\n_-,V)$ the standard complex of $\n_-$ with coefficients
in $V$,
\bea
0\to C_{0}(\n_-,V) \to C_1(\n_-,V) \to 0,
\eea
where $C_{0} (\n_-,V)=\n_-\otimes V$, $C_1 (\n_-,V)=V$, and the map is $f\otimes v\mapsto fv$.
We have the weight decomposition
\bea
C_\bullet (\n_-,V) = \oplus_\la C_\bullet (\n_-,V)[\la],
\eea
where $C_\bullet (\n_-,V)[\la]$ is
\bean
\label{C(n,V)}
0\to\n_-\otimes V[\la+2]\to V[\la]\to 0.
\eean

\subsection{Verma modules}
\label{Verma sec}

For $m \in \mathbb{C}$, the Verma module $M_m$ is the
infinite dimensional $\mathfrak{sl}_2$-module
generated by a single vector
$v_m$ such that $hv_m = m v_m$ and $ev_m =0$. The
vectors $f^jv_m$, $j=0,1,\dots,$ form a basis of  $M_m$.
The action is given by the formulas
\bea
f \cdot f^jv_m  =  f^{j+1}v_m ,
\quad
   h \cdot f^jv_m  =  (m-2j)f^jv_m ,
 \quad
   e \cdot f^jv_m  =  j(m-j+1)f^{j-1}v_m .
\eea
Consider the contragradient module $M_m^*$ with the basis $\phi^{j}_{m}$, $j\in\Z_{\geq 0}$,
dual to the basis $f^jv_m$ of $M_m$. We have
\bea
f \cdot \phi^{j}_{m}  =  (j+1)(m-j) \phi^{j+1}_{m} ,
\quad
   h \cdot \phi^{j}_{m}  =  (m-2j)\phi^{j}_{m} ,
\quad  e \cdot \phi^{j}_{m}  =  \phi^{j-1}_{m} .
\eea
The Shapovalov symmetric bilinear form on $M_m$ is defined by the conditions
\bea
S(v_m,v_m) = 1,  \qquad S(fx,y)=S(x,ey),
\eea
for all $x,y\in M_m$. The Shapovalov form defines the morphism of modules
\bea
S : M_m\to M_m^*, \qquad x \mapsto S(x,\cdot).
\eea
The image  $L_m:=\on{Im}(S) \hookrightarrow M^*_m $ is irreducible.

If $m \notin \mathbb{Z}_{\geq 0}$, then $M_m$ is irreducible,
otherwise the subspace with basis $ f^{j}v_m, j\geq m+1,$ is
 a submodule which is identified with the Verma module $M_{-m-2}$
 under the map $M_{-m-2} \hookrightarrow M_m$, $f^jv_{-m-2} \mapsto f^{j+m+1}v_m$.
 The quotient $M_m/M_{-m-2}$ is an irreducible module with
 basis induced by $v_m, fv_m, \dots, f^mv_m$. The submodule $M_{-m-2} \hookrightarrow M_m$ is
 the kernel of the Shapovalov form.
  The induced Shapovalov form on  $M_m/M_{-m-2}$ identifies $M_m/M_{-m-2}$ and  $L_m \hookrightarrow M^*_m$.

We have the exact sequence of $\slt$-modules
\bea
0 \to L_m \to M_{m}^* \to M_{-m-2}^* \to 0 ,
\eea
which is called the {\it BGG resolution} of the irreducible $\slt$-module $L_m$, see \cite{BGG}.
We will keep two terms of this sequence
\bean
\label{compl 1}
 M_{m}^* \xrightarrow{\iota} M_{-m-2}^*
\eean
in which the epimorphism is denoted by $\iota$. We consider this map as a complex with terms in degree 0 and 1.

\subsection{Tensor product of Verma modules}
\label{product Verma sec}

For a vector $\mm = (m_1,\dots,m_n) \in \mathbb{C}^n$, denote
$|\mm| = m_1 + \dots + m_n$. Consider the tensor product $\OT$ of Verma modules.
 For
$J=(j_1,\dots,j_n) \in \mathbb{Z}_{\geq 0}^n$, let
\be
f^Jv_\mm := f^{j_1}v_{m_1}\otimes \dots \otimes f^{j_n}v_{m_n}.
\ee
The vectors $f^Jv_\mm$ form a basis of
$\OT$. We have
 \bea
 &&
f \cdot f^Jv_\mm = \sum_{a=1}^n f^{J+1_a}v_\mm,
\quad
h \cdot f^Jv_\mm = ( |\bs m|-2|J|) f^Jv_\mm,
\\
&&
\phantom{aaaaaaaa}
e \cdot f^Jv_\mm = \sum_{a=1}^n j_a (m_a-j_a+1) f^{J-1_a}v_\mm ,
\eea
where $J \pm 1_a = (j_1, \dots , j_a \pm 1, \dots , j_n)$.

We have the weight decomposition
\bea
\OT =\oplus_{k=0}^\infty \,(\OT)[|\bs m|-2k].
\eea
The basis in $(\OT)[|\bs m|-2k]$ is formed by the monomials $f^Jv_\mm$ with $|J|=k$.

Consider the restricted dual space $(\OT)^*$ with the weight decomposition
\bea
(\OT)^* =\oplus_{k=0}^\infty \,(\OT)^*[|\bs m|-2k].
\eea
The basis of $(\OT)^*[|\bs m|-2k]$ is formed by vectors
\bea
\phi^J_\mm : = \phi^{j_1}_{m_1}\otimes\dots\otimes \phi^{j_n}_{m_n}
\eea
with $|J|=k$.

The $\slt$-action is given by the formulas
\bea
f\cdot \phi^J_\mm = \sum_{a=1}^n (j_1+1)(m-j_a)\phi^{J+1_a}_\mm,
\quad
h\cdot \phi^J_\mm = (|\bs m|-2|J|) \phi^{J+1_a}_\mm,
\quad
e\cdot \phi^J_\mm = \sum_{a=1}^n \phi^{J-1_a}_\mm.
\eea

\subsection{Tensor product of complexes}
\label{Tensor product of complexes}

Let coordinates of $\mm=(m_1,\dots,m_n)$ be positive integers.
For $a=1,\dots,n$,
denote  by $A^0_{m_{a}} \xrightarrow{\iota_{a}} A^1_{m_a}$  the complex
$  M_{m_a}^* \xrightarrow{\iota_{a}} M_{-m_a-2}^* $.
Consider the tensor product $(A^\bullet_\mm, \iota)$ of these complexes, where
\bea
A^i_\mm = \oplus_{i_1+\dots+i_n=i}\, A^{i_1}_{m_1}\otimes\dots\otimes  A^{i_n}_{m_n}, \qquad
i=0,\dots,n,
\eea
with differential
\bea
\iota : x_1\otimes\dots\otimes x_n \mapsto \sum_{a=1}^n (-1)^{\deg x_1+\dots +\deg x_{a-1}} x_1\otimes \dots \otimes \iota_{a}x_a\otimes
\dots\otimes x_n.
\eea
The differential is a morphism of $\slt$-modules.
We have
\bea
\otimes_{a=1}^n L_a = \ker ( \iota : A^0_\mm \to A^1_\mm).
\eea
At all other degrees the complex $(A^\bullet_\mm, \iota)$ is acyclic.
Thus  $(A^\bullet_\mm, \iota)$ gives the resolution of $\otimes_{a=1}^n L_a$ which we will call the {\it BGG resolution} of $\otimes_{a=1}^n L_a$.

Consider the complex $C_\bullet(\n_-, A^\bullet_\mm)$,
\bean
\label{Lie product}
\n_-\otimes A^\bullet_\mm \xrightarrow{f} A^\bullet_\mm .
\eean
The differential $f$ of this complex commutes with the differential $\iota$ acting on $A^\bullet_\mm$.
Consider the complex $(B^\bullet_\mm, \tilde d)$, where
\bea
B^i_\mm = (\n_-\otimes A^{i}_\mm) \oplus A^{i-1}_\mm, \qquad i=0,\dots, n+1,
\eea
and
\bea
\tilde d \ :\  f\otimes x + y \ \mapsto \ fx - f\otimes \iota x + \iota y.
\eea
The embeddings
\bean
\label{emb}
&&
\\
&&
\notag
\n_-\otimes\otimes_{a=1}^nL_{m_a}  \hookrightarrow \n_-\otimes(\OT)^* = B^0_\mm,
\qquad
\otimes_{a=1}^nL_{m_a}  \hookrightarrow (\OT )^* \hookrightarrow B^1_\mm
\eean
define the morphism
of complexes
\bean
\label{bgg}
C_\bullet(\n_-,\otimes_{a=1}^nL_{m_a}) \to (B^\bullet_\mm,\tilde d).
\eean

\begin{lem}
This morphism is a quasi-isomorphism.
\end{lem}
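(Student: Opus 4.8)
The plan is to exploit the fact that $(A^\bullet_\mm,\iota)$ is a resolution of $\bigotimes_{a=1}^n L_{m_a}$ and that the functor $C_\bullet(\n_-,-)$ is exact, then reorganize the resulting double complex. Concretely, I would view everything as a double complex with the $\iota$-differential in one direction and the Chevalley-Eilenberg $f$-differential in the other. The complex $(B^\bullet_\mm,\tilde d)$ is, up to a sign convention, precisely the total complex of the double complex $C_\bullet(\n_-,A^\bullet_\mm)$ (which has two rows, $\n_-\otimes A^i_\mm$ and $A^i_\mm$, indexed by $i=0,\dots,n$), with the overall degree shifted so that $B^i_\mm=(\n_-\otimes A^i_\mm)\oplus A^{i-1}_\mm$; one checks $\tilde d^2=0$ from $\iota^2=0$, $f$ being a chain map for $\iota$, and $f\circ f=0$ in the two-term Chevalley-Eilenberg complex (here trivially, since there are only two terms). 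The morphism \Ref{bgg} is the inclusion of the subcomplex obtained by replacing $A^\bullet_\mm$ with its $0$-th cohomology $\bigotimes_a L_{m_a}$ in the $\iota$-direction, i.e. $C_\bullet(\n_-,\bigotimes_a L_{m_a})$ sitting inside the total complex via \Ref{emb}.

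The key steps, in order, are: (1) recognize $(B^\bullet_\mm,\tilde d)$ as $\mathrm{Tot}\bigl(C_\bullet(\n_-,A^\bullet_\mm)\bigr)$ with the stated grading; (2) run the spectral sequence of this double complex taking $\iota$-cohomology first. Since $\n_-$ is free (one-dimensional), $\n_-\otimes(-)$ is exact, so $H^\bullet_\iota(\n_-\otimes A^\bullet_\mm)=\n_-\otimes\bigl(\bigotimes_a L_{m_a}\bigr)$ concentrated in $\iota$-degree $0$, and $H^\bullet_\iota(A^\bullet_\mm)=\bigotimes_a L_{m_a}$ concentrated in $\iota$-degree $0$ as well (these are exactly the statements ``$\bigotimes_a L_a=\ker\iota$ and the complex is acyclic elsewhere'' recorded just above the Lemma). (3) Hence the $E_1$-page of the spectral sequence collapses onto the two-term complex $\n_-\otimes\bigotimes_a L_{m_a}\xrightarrow{f}\bigotimes_a L_{m_a}$, which is exactly $C_\bullet(\n_-,\bigotimes_a L_{m_a})$; the induced map on $E_1$ is the one coming from \Ref{emb}, so it is an isomorphism, and therefore \Ref{bgg} is a quasi-isomorphism.

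Alternatively, and perhaps more cleanly for a short write-up, I would argue by the long exact sequence / mapping cone: let $K^\bullet$ be the cokernel complex of the inclusion \Ref{bgg}. Using that $A^\bullet_\mm\to\bigotimes_a L_{m_a}$ (placed in degree $\bullet=0$) is a quasi-isomorphism and that $\n_-\otimes-$ preserves quasi-isomorphisms, one sees that both rows of the double complex $C_\bullet(\n_-,A^\bullet_\mm/(\text{resolution of }\bigotimes L))$ are acyclic, so the total complex is acyclic; since $K^\bullet$ is that total complex (with the degree shift), $K^\bullet$ is acyclic, and \Ref{bgg} is a quasi-isomorphism. Either route reduces to the same two inputs.

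The main obstacle is bookkeeping rather than conceptual: one must check carefully that the signs in $\tilde d$ (the $(-1)^{\deg x_1+\dots+\deg x_{a-1}}$ in $\iota$, the sign with which $f\otimes\iota x$ enters $\tilde d$, and the degree shift $A^{i-1}_\mm$) are consistent with $\tilde d^2=0$ and with identifying $(B^\bullet_\mm,\tilde d)$ as an honest total complex, and that under these identifications the maps in \Ref{emb} really do land in the kernel of the relevant differential and induce the Chevalley-Eilenberg differential $f$ on cohomology. Once the signs are pinned down, the acyclicity is immediate from the already-established facts that $\bigotimes_a L_a=\ker\iota$ in degree $0$ and $(A^\bullet_\mm,\iota)$ is acyclic in positive degrees, together with exactness of $\n_-\otimes-$.
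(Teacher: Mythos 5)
The paper states this lemma without proof, so there is nothing to compare your argument against; taken on its own, your argument is correct. The identification of $(B^\bullet_\mm,\tilde d)$ with the total complex of the two-row double complex $C_\bullet(\n_-,A^\bullet_\mm)$ is exactly right (the $-1$ in front of $f\otimes\iota x$ is precisely the sign needed to make the commuting $f$ and $\iota$ into anticommuting differentials of a double complex), and the inclusion \Ref{bgg} is the map induced by the augmentation $\otimes_a L_{m_a}\hookrightarrow A^\bullet_\mm$. Filtering by rows, the $E_0$ differential is $\iota$; since $(A^\bullet_\mm,\iota)$ is a resolution of $\otimes_a L_{m_a}$ in degree~$0$ and $\n_-\otimes(-)$ is exact ($\n_-$ being one-dimensional), the $E_1$ page of $B^\bullet_\mm$ collapses to $C_\bullet(\n_-,\otimes_a L_{m_a})$ concentrated in column~$0$, and the map \Ref{bgg} induces an isomorphism on $E_1$. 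As the filtration is bounded (only two rows), this gives a quasi-isomorphism. Your mapping-cone variant is equivalent and arguably cleaner; note only the small slip that the quasi-isomorphism to invoke is the coaugmentation $\otimes_a L_{m_a}\to A^\bullet_\mm$, not a map in the opposite direction. Either way the only real content is the pair of facts the paper records just above the lemma: $\otimes_a L_a=\ker(\iota\colon A^0_\mm\to A^1_\mm)$ and $(A^\bullet_\mm,\iota)$ is acyclic in positive degrees.
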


This quasi-isomorphism will be called the {\it BGG resolution of}   $C_\bullet(\n_-,\otimes_{a=1}^nL_{m_a})$.

The complex $(B^\bullet_\mm,\tilde d)$ has weight decomposition. For any $\la\in\C$ we have
\bea
B^i_\mm[\la] = (\n_-\otimes A^{i}_\mm[\la+2]) \oplus A^{i-1}_\mm[\la],
\eea

In the next section we identify the complex $(B^\bullet_\mm[|\bs m|-2k], \tilde d)$
with the skew-symmetric part of the Aomoto complex of a suitable weighted Orlik-Solomon manifold.

\section{Discriminantal arrangements with $\slt$ weights}
\label{sec slT}

\subsection{Weighted discriminantal arrangement in $\C^k$}
\label{sec discarr}

Fix $\mm=(m_1,\dots,m_n)$ with positive integer coordinates and a positive integer $k$.
We assume that $m_j\leq k-1$ for $j=1,\dots, n_0$ and $m_j>k-1$ for $j=n_0+1,\dots,n$.
Fix $(z_1,\dots,z_n)\in\C^n$ with distinct coordinates. Fix a generic nonzero number $\ka$.

Consider $\C^k$ with coordinates $t_1,\dots,t_k$ and the weighted discriminantal arrangement
$\mc A$ consisting of the following hyperplanes:
$H_{i,j}$ defined by the equation $t_i-t_j=0$ for $1\leq i<j\leq k$,
$H_i^j$ defined by the equation $t_i-z_j=0$ for $i=1,\dots,k,\,j=1,\dots,n$.
The weights are $a_{i,j} = 2/\ka$, $a_i^j=-m_j/\ka$.
We denote by $H\subset \C^k$ the union of all hyperplanes of $\mc A$. Set
 $U=\C^k- H$.

The symmetric group $S_k$ acts on $\C^k$ by permutation of coordinates. The action preserves
the weighted  arrangement $\mc A$.

For $j=1,\dots,n_0$, let $I\subset \{1,\dots,n\}$ be a subset with $m_j+1$ elements.
The edge $X_I^j$ of $\mc A$ defined by equations
$t_i=z_j$ for $ i\in I$, is resonant.

\begin{lem}
The edges $X_I^j$, $j=1,\dots,n_0$, $|I|=m_j+1$, are the only resonant dense edges of $\mc A$.
\end{lem}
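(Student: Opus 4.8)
The plan is to analyze edges of $\mc A$ according to which hyperplanes contain them, compute their weights, and check resonance. Recall that a dense edge $X_\al$ has weight $a_\al=\sum_{l\in I_\al}a_l$, where $I_\al$ indexes the hyperplanes of $\mc A$ containing $X_\al$; and $X_\al$ is resonant iff $a_\al=0$. Since the weights here are $a_{i,j}=2/\ka$ for diagonals $t_i=t_j$ and $a_i^j=-m_j/\ka$ for coordinate hyperplanes $t_i=z_j$, and $\ka$ is generic, I would first argue that $a_\al$, as a $\C$-linear combination of $\{1/\ka\}$ with coefficients that are polynomial in $m_1,\dots,m_n$, vanishes iff the corresponding integer combination $\sum 2\cdot(\text{\#diagonals}) - \sum m_j\cdot(\text{\#coordinate hyperplanes at }z_j)$ vanishes. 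So resonance is a purely combinatorial condition on the edge, independent of the generic $\ka$.

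Next I would enumerate the dense edges. A general edge $X$ is cut out by some set of equations among $t_i=t_j$ and $t_i=z_j$; grouping coordinates into blocks that are forced equal, each block is either "free" (no value $z_j$ imposed) or "pinned" to some $z_j$. Density (irreducibility of the subarrangement of all hyperplanes through $X$) forces the edge to be of a restricted shape: a dense edge must be either a single diagonal-type edge on one block of coordinates set equal among themselves, or a block of coordinates all set equal to a single $z_j$ — mixed or multi-block configurations are not dense because the ambient subarrangement decomposes. (I'd cite the standard characterization of dense edges in discriminantal-type arrangements, as in \cite{SV, STV}.) For a single block of $p\ge 2$ free coordinates set mutually equal, the hyperplanes through it are the $\binom p2$ diagonals, giving weight $\binom p2\cdot 2/\ka\ne 0$: not resonant. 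For a block $X_I^j$ where the coordinates indexed by $I$, $|I|=p$, are all set equal to $z_j$, the hyperplanes through it are the $\binom p2$ diagonals among $I$ together with the $p$ coordinate hyperplanes $t_i=z_j$ for $i\in I$ (and no others, since $z_j\ne z_{j'}$), so the weight is $\bigl(2\binom p2 - p\,m_j\bigr)/\ka = p(p-1-m_j)/\ka$. This vanishes exactly when $p=m_j+1$.

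Finally I would assemble: the only resonant dense edges are the $X_I^j$ with $p=|I|=m_j+1$, and these actually occur as edges of $\mc A$ (i.e. are nonempty and of the expected codimension) precisely when $m_j+1\le k$, i.e. $m_j\le k-1$, which by hypothesis holds exactly for $j=1,\dots,n_0$. For $j>n_0$ we would need more than $k$ coordinates equal to $z_j$, impossible in $\C^k$, so no resonant dense edge arises there. This gives exactly the claimed list $X_I^j$, $j=1,\dots,n_0$, $|I|=m_j+1$. The main obstacle I anticipate is the density bookkeeping in the enumeration step — making fully precise why a dense edge cannot simultaneously pin coordinates to two distinct $z_j$'s or combine a pinned block with a separate free diagonal block, which amounts to checking that in each such case the subarrangement of all hyperplanes through the edge splits into independent coordinate groups; the weight computation itself is then routine.
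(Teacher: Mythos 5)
Your proof is correct and follows essentially the same route as the paper's: enumerate the dense edges as either a free block $t_{i_1}=\dots=t_{i_\ell}$ or a pinned block $t_{i_1}=\dots=t_{i_\ell}=z_j$, then compute the weight in each case and observe that only the pinned type can vanish, precisely when $\ell=m_j+1\le k$. The paper states the enumeration without justification and leaves the weight computation to the reader ("one checks"); your write-up supplies both, including the decomposition argument ruling out multi-block or mixed configurations, which is exactly the content of density.
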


\begin{proof}
The dense edges of $\mc A$ have the form $t_{i_1}= \dots = t_{i_\ell}$ or
$t_{i_1}= \dots = t_{i_\ell}=z_j$ for $2\leq \ell \leq k$. One checks that the edges
of the former type are not resonant, and edges of the latter type are resonant if and
only if $\ell=m_j+1\leq k$.
\end{proof}

\subsection{Skew-symmetric part of Aomoto complex of $U$}
\label{sec twodim}

 The symmetric group $S_k$ naturally acts on the Orlik-Solomon spaces $A^\bullet(U)$.
 The {\it skew-symmetrization} of a form $\eta \in A^\bullet(U)$
is the form $\on{Skew}\eta := \sum_{\sigma\in S_k} (-1)^{|\si|}\si\eta$. The form $\on{Skew}\eta$ is skew-symmetric.
More generally, if $G\subset S_k$ is a subgroup, then
the $G$-{\it skew-symmetrization} of a form $\eta \in A^\bullet(U)$
is the form $\on{Skew}_G\eta := \sum_{\sigma\in G} (-1)^{|\si|}\si\eta$.

The skew-symmetric part $A^\bullet_-(U)$ of the Orlik-Solomon spaces $A^\bullet(U)$ is described in \cite{SV}. We have
$A^p_-(U)\ne 0$ only if $p=k-1,k$. Let $J=(j_1,\dots,j_n)$ be a vector with nonnegative integer
coordinates and $|J|=k$.
Define  $l_0(J)=0$ and $l_i(J)=j_1+\dots+j_i$ for $i=1,\dots,n$, and
\bea
\eta_{J,i}= d\log(t_{l_{i-1}(J)+1} - z_i)\wedge \dots \wedge d\log(t_{l_i(J)}-z_i)
\eea
for $i=1,\dots,n$.
Let $\omega_J$ be the skew-symmetrization of the $k$-form $\al_J \, \eta_{J,1}\wedge\dots\wedge\eta_{J,n}$, where
$\al_J=(\ka^{|J|}j_1!\dots j_n!)^{-1}$.

Let $J=(j_1,\dots,j_n)$ be a vector with nonnegative integer
coordinates and $|J|=k-1$.
Define  the $(k-1)$-form
$\eta_J=\al_J \, \eta_{J,1}\wedge\dots\wedge\eta_{J,n}$ as above,
and then $\om _J$ as the skew-symmetrization of $(-1)^k\eta_J$.

\begin{lem}
[\cite{SV}]
\label{lem skeW forms}
The forms $\{\om_J\}_{|J|=k}$ form a basis of $A^{k}_-(U)$.
The forms $\{\om_J\}_{|J|=k-1}$ form a basis of $A^{k-1}_-(U)$.

\end{lem}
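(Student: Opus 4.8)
This is \cite{SV}; here is how I would organize a proof. Since $\on{Skew}=\sum_{\si\in S_k}(-1)^{|\si|}\si$ is $k!$ times the projector onto the sign-isotypic summand of the $S_k$-module $A^\bullet(U)$, the space $A^p_-(U)$ is exactly that summand, and the task is to identify it. The first move is to split off the ``coordinate'' part: let $B^\bullet\subset A^\bullet(U)$ be the subalgebra generated by the $1$-forms $\om^{(l)}_i:=d\log(t_i-z_l)$. This is the Orlik--Solomon algebra of the subarrangement $\{t_i=z_l\}_{i,l}$, whose complement is $(\C\minus\{z_1,\dots,z_n\})^k$; since the Orlik--Solomon algebra of a subarrangement embeds into that of the full arrangement, $B^\bullet\cong H^\bullet\bigl((\C\minus\{z_1,\dots,z_n\})^k\bigr)$, which has the monomial basis $\bigl\{\om^{(l_1)}_{i_1}\wedge\dots\wedge\om^{(l_p)}_{i_p}:1\le i_1<\dots<i_p\le k\bigr\}$ (with $\om^{(l)}_i\wedge\om^{(l')}_i=0$). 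Every form $\om_J$ of the statement lies in $B^\bullet$, being a skew-symmetrization of a coordinate monomial.

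Next I would compute $B^\bullet_-$. As $S_k$-modules $B^p\cong\on{Ind}_{S_p\times S_{k-p}}^{S_k}\bigl((\mathrm{sgn}_{S_p}\ox(\C^n)^{\ox p})\boxtimes\mathrm{triv}_{S_{k-p}}\bigr)$, so Frobenius reciprocity gives $\dim B^p_-=\dim\bigl((\C^n)^{\ox p}\bigr)^{S_p}=\binom{n+p-1}{p}=\#\{J\in\Z_{\ge0}^n:|J|=p\}$ when $p\in\{k-1,k\}$, and $B^p_-=0$ for all other $p$. Moreover the $\om_J$ are linearly independent: expanding $\om_J$ in the monomial basis of $B^\bullet$ produces a signed sum over exactly those basis monomials whose multiplicity vector $(\#\{a:l_a=1\},\dots,\#\{a:l_a=n\})$ equals $J$, and these index sets are nonempty and pairwise disjoint for distinct $J$. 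Hence $\{\om_J\}_{|J|=k}$ is a basis of $B^k_-$ and $\{\om_J\}_{|J|=k-1}$ is a basis of $B^{k-1}_-$.

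The remaining, and main, point is that $A^\bullet_-(U)=B^\bullet_-$. One inclusion, $B^\bullet_-\subseteq A^\bullet_-(U)$, is immediate, so the content is that the diagonal forms $\om_{i,i'}:=d\log(t_i-t_{i'})$ add nothing to the sign-isotypic part, i.e. $\on{Skew}(\om_F)\in B^\bullet$ modulo Orlik--Solomon relations for every monomial $\om_F$ of $A^\bullet(U)$ (these are indexed by forests $F$ on the vertex set $\{1,\dots,k\}\sqcup\{z_1,\dots,z_n\}$, with $S_k$ permuting the first $k$ vertices). Three observations drive this. (a) If two vertices of $\{1,\dots,k\}$ lie on no edge of $F$, the transposition swapping them fixes $\om_F$, so $\on{Skew}(\om_F)=0$; thus a surviving $F$ misses at most one $t$-vertex. (b) In $\on{Skew}(\om_F)$ a diagonal form at a leaf edge $\{i,i'\}$ occurs only through an antisymmetric combination $\om_{i,i'}\wedge(\om^{(l)}_i-\om^{(l)}_{i'})$, and the triangle relation $\om_{i,i'}\wedge(\om^{(l)}_i-\om^{(l)}_{i'})=\om^{(l)}_i\wedge\om^{(l)}_{i'}$ collapses it; iterating along a tree-path toward a $z$-vertex expresses $\on{Skew}(\om_F)$ through $B^\bullet$ whenever every connected component of $F$ carrying a diagonal edge also carries some $z_l$. (c) If a connected component $M$ of $F$ is purely diagonal, then $\on{Skew}(\om_F)$ is a sum of $S_k$-translates of expressions $\on{Skew}_{S_M}(\om_M)\wedge(\cdots)$, which vanish because $\om_M$ is a top-degree class of the Orlik--Solomon algebra of the braid arrangement on $M$, and that algebra carries no sign representation of $S_{|M|}$ for $|M|\ge2$ (classically, $H^{a-1}(\on{Conf}_a\C)\ox\mathrm{sgn}$ is the arity-$a$ piece of the Lie operad, which has no trivial summand for $a\ge2$). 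Combining (a)--(c), the surviving terms are skew-symmetrizations of disjoint unions of stars centred at distinct $z_l$ --- precisely the $\om_J$, up to the normalization $\al_J=(\ka^{|J|}j_1!\dots j_n!)^{-1}$ --- and counting edges against occupied $t$-vertices forces $p\in\{k-1,k\}$.

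The step I expect to be genuinely delicate is (b): organizing the repeated use of the triangle relation --- choosing the transposition pairings and tracking signs through the forest --- so that the reduction of $\on{Skew}(\om_F)$ into $B^\bullet$ provably terminates for an arbitrary forest, not just for one with a diagonal leaf edge. (An alternative, more computational, route to the inequality $\dim A^p_-\le\dim B^p_-$ would be to read off $\dim A^p_-$ from the $S_k$-equivariant Poincar\'e polynomial of $U=\on{Conf}_k(\C\minus\{z_1,\dots,z_n\})$, using the Fadell--Neuwirth fibration, and compare with the formula above.)
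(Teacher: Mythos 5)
Your overall strategy --- split off the coordinate subalgebra $B^\bullet\subset A^\bullet(U)$, compute its sign-isotypic part by Frobenius reciprocity, check that the $\om_J$ give a basis of $B^\bullet_-$, and then argue $A^\bullet_-(U)\subseteq B^\bullet$ monomial by monomial --- is a legitimate way to organize a proof, and it is organized differently from the route in \cite{SV}, which works through the flag-form description of the Orlik--Solomon algebra rather than through this decomposition. The pieces you present in full are sound: the injectivity of $A(\mc B)\hookrightarrow A(\mc A)$ for a subarrangement (nbc basis with coordinate hyperplanes ordered first), the count $\dim B^p_-=\binom{n+p-1}{p}$ for $p\in\{k-1,k\}$ and zero otherwise, the linear independence of the $\om_J$ via multiplicity vectors, observation (a), and observation (c) (no sign representation in the top Orlik--Solomon group of a braid arrangement on $\ge 2$ letters). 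You should also record, for the nonvanishing of a forest monomial, that each component of $F$ may carry at most one $z$-vertex, since a component joining two $z$'s corresponds to hyperplanes with empty intersection; this is harmless but needs to be said to make the enumeration in (a)--(c) exhaustive.

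The genuine gap, which you correctly flag yourself, is step (b), and as written it is not a proof. The triangle relation you invoke applies to the circuit $\{t_i=t_j,\,t_i=z_l,\,t_j=z_l\}$ and therefore removes a diagonal edge only when a coordinate form at the same $z_l$ is already present at one of its two endpoints. For a component such as $z_l - t_1 - t_2 - t_3$ the diagonal edge $\{2,3\}$ has no coordinate form at either endpoint; the only available substitution (at $\{t_2,t_3,z_l\}$ via $\om_3^{(l)}$, through $t_1$) produces monomials like $\om_{12}\wedge\om_{23}\wedge\om_2^{(l)}$ or $\om_{12}\wedge\om_1^{(l)}\wedge\om_{23}$ that still carry two diagonal forms, and it is not clear under which well-founded order these forests decrease. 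Moreover a single monomial $\om_F$ with a diagonal edge cannot, in general, be rewritten inside $B^\bullet$ at all (already $\om_{12}\notin B^1$); it is only the full skew-symmetrization that lands in $B^\bullet$, so the argument must work with the symmetrized sums throughout, tracking how the $S_k$-action pairs the monomials produced by each substitution. You therefore need an explicit induction (say on the number of diagonal edges together with their total distance to the component's $z$-vertex) organized at the level of $\on{Skew}(\om_F)$, not of $\om_F$. Your alternative --- bounding $\dim A^p_-(U)$ by an $S_k$-equivariant Poincar\'e-polynomial computation for $U=\on{Conf}_k(\C\minus\{z_1,\dots,z_n\})$ --- would close the argument together with the independence of the $\om_J$, but note that the Fadell--Neuwirth fibration is not $S_k$-equivariant, so extracting the equivariant answer is itself a nontrivial step that you would need to supply.
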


Define the form
\bean
\label{oma}
\om_a = \sum_{H\in \mc A}\, a_H \,d\log f_H \ \in \ A^1(U).
\eean
The form $\om_a$ is symmetric with respect to the $S_k$-action.

\begin{lem}[\cite{SV}]
\label{Lem SV}
For any vector $\bs m\in \C^n$, the complex \ ${}$\ $\wedge \, \om_a : A^{k-1}_-(U) \to A^{k}_-(U)$ is isomorphic
to the weight component of weight $|\bs m|-2k$ of the complex $\n_-\otimes (\OT)^* \to (\OT)^*$.
The isomorphism sends $\om_J$ to $f\otimes \phi^J_\mm $ if $|J|=k-1$ and to $\phi^J_\mm$ if $|J|=k$.
\label{thm SV}
\end{lem}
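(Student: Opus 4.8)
The plan is to reduce the lemma to one explicit identity in the Orlik--Solomon algebra and then evaluate that identity by iterated residues. By the weight decomposition recalled in Section~\ref{slt sec}, the weight-$(|\bs m|-2k)$ component of $\n_-\otimes(\OT)^*$ equals $\n_-\otimes(\OT)^*[|\bs m|-2(k-1)]$, with basis $\{\,f\otimes\phi^J_\mm : J\in\Z_{\ge 0}^n,\ |J|=k-1\,\}$, while $(\OT)^*[|\bs m|-2k]$ has basis $\{\,\phi^J_\mm : |J|=k\,\}$; and by Lemma~\ref{lem skeW forms} the forms $\{\om_J\}_{|J|=k-1}$ and $\{\om_J\}_{|J|=k}$ are bases of $A^{k-1}_-(U)$ and $A^k_-(U)$. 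So $\om_J\mapsto f\otimes\phi^J_\mm$ and $\om_J\mapsto\phi^J_\mm$ is a linear isomorphism of the relevant graded spaces, and since $f\cdot\phi^J_\mm=\sum_{a=1}^n(j_a+1)(m_a-j_a)\,\phi^{J+1_a}_\mm$ (the contragredient action of Section~\ref{product Verma sec}, correcting the misprint there where $j_1,m$ should read $j_a,m_a$), the entire content of the lemma is the identity
\begin{equation*}
\om_a\wedge\om_J \ =\ \sum_{a=1}^n (j_a+1)(m_a-j_a)\,\om_{J+1_a},\qquad |J|=k-1 .
\end{equation*}
Note that $\om_a\wedge\om_J$ is again $S_k$-skew-symmetric, as $\om_a$ is $S_k$-invariant while $\om_J$ transforms by the sign character; hence it lies in $A^k_-(U)=\bigoplus_{|J'|=k}\C\,\om_{J'}$ and the right-hand side is genuinely its expansion in that basis.

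To compute the coefficient $c_{J'}$ of $\om_{J'}$ in $\om_a\wedge\om_J$ I would pair $\om_a\wedge\om_J$ against the iterated-residue functional attached to the coordinate flag $t_1=z_{i_1},\,t_2=z_{i_2},\dots$ read off from $J'$; by \cite{SV} such residues detect the basis $\{\om_{J'}\}$, up to explicit nonzero constants. Split $\om_a=\om_a^{\,\mathrm{diag}}+\om_a^{\,\mathrm{coord}}$ into the diagonal part $\tfrac{2}{\ka}\sum_{l<l'}d\log(t_l-t_{l'})$ and the coordinate part $-\tfrac{1}{\ka}\sum_{l,i}m_i\,d\log(t_l-z_i)$. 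Along a single coordinate hyperplane $\{t_l=z_i\}$ a diagonal form $d\log(t_l-t_{l'})$ is regular and so has no residue; but once the flag has been restricted to $\{t_{l'}=z_i\}$, that same form becomes $d\log(t_l-z_i)$ and does contribute at a later stage. Carrying this out, one checks that $c_{J'}=0$ unless $J'=J+1_a$ for some $a$, and that $c_{J+1_a}$ is assembled from: the direct term $-m_a/\ka$ coming from the weight of a hyperplane $\{t_l=z_a\}$; the $+2/\ka$ contributions of the $j_a$ diagonal hyperplanes $\{t_l=t_{l'}\}$ with $t_{l'}$ in the $a$-th block, which become coordinate forms along the flag and produce the $+j_a$ in the factor $m_a-j_a$; and the ratio of normalizing constants $\al_J=\ka(j_a+1)\,\al_{J+1_a}$, which supplies the factor $j_a+1$ and cancels the powers of $\ka$. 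Collecting everything gives $c_{J+1_a}=(j_a+1)(m_a-j_a)$.

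The main obstacle is precisely the last step: the sign and normalization bookkeeping --- tracking the $(-1)^k$ in the definition of the $\om_J$ with $|J|=k-1$, the factorials inside $\al_J$, the orders in which residues are taken, and the factor $2$ in the diagonal weights --- needed to pin the coefficient down to the exact value $(j_a+1)(m_a-j_a)$ rather than some undetermined scalar multiple of it. This is what is actually carried out in \cite{SV}, so in practice one only has to match conventions. An equivalent and more transparent way to organize the residue computation is via the master function $\Phi_{\bs m}=\prod_{l<l'}(t_l-t_{l'})^{2/\ka}\prod_{l,i}(t_l-z_i)^{-m_i/\ka}$: since $\om_a=d\log\Phi_{\bs m}$, one has $\om_a\wedge\eta=\Phi_{\bs m}^{-1}\,d(\Phi_{\bs m}\eta)$ for closed $\eta$, and the weighted flag residues of $\Phi_{\bs m}\,\eta_{J,1}\wedge\dots\wedge\eta_{J,n}$ produce exactly the products $(j_a+1)(m_a-j_a)$.
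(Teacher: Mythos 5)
The paper does not prove this lemma: it is stated with a citation to \cite{SV} and used as a black box, so there is no in-paper argument to compare against. Your reduction is the right one and isolates exactly what needs to be checked: by Lemma~\ref{lem skeW forms} the sets $\{\om_J\}_{|J|=k-1}$ and $\{\om_J\}_{|J|=k}$ are bases of $A^{k-1}_-(U)$ and $A^k_-(U)$, while $\{f\otimes\phi^J_\mm\}_{|J|=k-1}$ and $\{\phi^J_\mm\}_{|J|=k}$ are bases of the two terms of the weight-$(|\bs m|-2k)$ component of $\n_-\otimes(\OT)^*\to(\OT)^*$, so the prescribed map is a linear bijection and the lemma is equivalent to the single identity $\om_a\wedge\om_J=\sum_{a=1}^n(j_a+1)(m_a-j_a)\,\om_{J+1_a}$ for $|J|=k-1$. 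You also correctly identify the misprint in the displayed formula for $f\cdot\phi^J_\mm$ in Section~\ref{product Verma sec} (it should read $(j_a+1)(m_a-j_a)$), and your observation that $\om_a\wedge\om_J$ is again $S_k$-skew-symmetric, hence lies in $\bigoplus_{|J'|=k}\C\,\om_{J'}$, is what makes the expansion meaningful.

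The iterated-residue verification you sketch is a reasonable route, and the three ingredients you isolate --- the $-m_a/\ka$ weight of the newly occupied coordinate hyperplane, the $j_a$ diagonal hyperplanes that become coordinate hyperplanes after restriction along the flag, and the ratio $\al_J/\al_{J+1_a}=\ka(j_a+1)$ --- are indeed the ones that enter the computation. The place where the sketch is genuinely incomplete, as you yourself flag, is the sign bookkeeping: the diagonal hyperplanes carry weight $+2/\ka$, and it is the Arnold relations in the Orlik--Solomon algebra, combined with the $(-1)^k$ in the definition of $\om_J$ for $|J|=k-1$ and the ordering of iterated residues, that produce the $-j_a$ in the factor $m_a-j_a$ rather than $+j_a$. (A quick sanity check at $n=1$, $k=2$, $J=(1)$: the Arnold relation $d\log(t_1-z_1)\wedge d\log(t_2-z_1)+d\log(t_2-z_1)\wedge d\log(t_1-t_2)+d\log(t_1-t_2)\wedge d\log(t_1-z_1)=0$ collapses $\om_a\wedge\om_J$ to $2(m_1-1)\,\om_{J+1_1}$, matching $(j_1+1)(m_1-j_1)$.) Since the paper itself provides no proof and defers entirely to \cite{SV}, your level of detail is consistent with the paper's treatment; just be aware that, as written, the sketch glosses over precisely the sign and normalization checks that distinguish $(j_a+1)(m_a-j_a)$ from, say, $(j_a+1)(m_a+j_a)$ or $-(j_a+1)(m_a-j_a)$.
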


\subsection{Skew-symmetric forms on $\mc P^m$}
\label{sec localproj}

For a positive integer $m$, consider a subset $I=\{1\leq i_0<\dots<i_{m}\leq k\}$
and the space $\C^{m+1}$ with coordinates $t_i, i\in I$. Consider
 the central arrangement in $\C^{m+1}$ consisting of coordinate hyperplanes and all diagonal hyperplanes.
This arrangement is preserved by the action of the  symmetric group $S_{m+1}$ which permutes  the coordinates.

Consider the projectivization  in $\mc P^m$ of the initial arrangement. The functions $u_{i_l} = t_{i_l}/t_{i_{0}}$,
$l=1,\dots,m$
are coordinates on an affine chart on $\mc P^m$. In these coordinates the projectivization of the initial arrangement  consists of hyperplanes
$u_{i_{l}}=0, u_{i_{l}}-1=0, u_{i_{l}}-u_{i_{q}}=0$ and the hyperplane at infinity.
 Denote $U\subset \mc P^m$ the complement to the arrangement.
 Let $A^\bullet_-(U)$ denote the skew-symmetric part of the Orlik-Solomon space $A^\bullet_-(U)$
with respect to the  $S_{m+1}$-action.

\begin{lem}
$A^p_-(U)=0$ if $p\ne m$, $\dim A^m_-(U)=1$. The form
\bea
\mu_I = d\log u_{i_{1}}\wedge\dots\wedge d\log u_{i_{m}}
\eea
generates  $A^m_-(U)$.
\label{lem onedim}
\end{lem}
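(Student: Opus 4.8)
The statement concerns the skew-symmetric part, under the $S_{m+1}$-action permuting the coordinates $t_i$, $i\in I$, of the Orlik-Solomon algebra of the complement $U\subset\mc P^m$ of the projectivized braid-type arrangement (coordinate hyperplanes $u_{i_l}=0$, affine hyperplanes $u_{i_l}=1$, diagonals $u_{i_l}=u_{i_q}$, and the hyperplane at infinity). The cleanest route is to work upstairs in $\C^{m+1}$ with the central arrangement of all coordinate hyperplanes $t_i=0$ and all diagonals $t_i=t_j$ (the Orlik-Solomon algebra of the projectivization is the quotient of the upstairs one by the relation that the full sum of generators along any rank-one flat vanishes, or equivalently one passes to the subalgebra of forms with ``no pole at infinity''). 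The upstairs arrangement is a direct product: after the change of coordinates $t_i\mapsto t_i$ it is the product of the rank-one coordinate arrangement $\{t_i=0\}$ with the braid arrangement $A_{m}$ on the $t_i$, but more to the point its intersection lattice is that of the \emph{partition lattice with an extra ``origin'' flat}, and the $S_{m+1}$-action is the standard permutation action. So the problem reduces to: compute the multiplicity of the sign representation in each graded piece of the Orlik-Solomon algebra of this arrangement.

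First I would recall the combinatorial description of $A^\bullet$ via the nbc-basis (Orlik-Solomon), or better, use the well-known $S_{m+1}$-equivariant formula for the Orlik-Solomon algebra of the braid/coordinate arrangement. For the reflection arrangement of type $B$ or for the ``coordinate + braid'' arrangement here, the top Orlik-Solomon group $A^m$ carries, as an $S_{m+1}$-representation, a copy of the sign representation exactly once, and the lower $A^p$ for $p<m$ contain no copy of the sign representation. This is the representation-theoretic content of Lemma \ref{lem onedim}. The concrete way to see it: by Theorem \ref{ESV thm} / the Brieskorn decomposition, $A^\bullet(U)=\bigoplus_X A^\bullet(U)_X$ over flats $X$, and $S_{m+1}$ permutes the flats; a flat $X$ of rank $p$ corresponds to a partition of $\{0,1,\dots,m\}$-type data, and its stabilizer acts on the local factor $A^{p}(U)_X\cong \det$-twisted by a product of smaller Orlik-Solomon tops. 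The sign representation of $S_{m+1}$ can appear in the induced module $\mathrm{Ind}_{\mathrm{Stab}(X)}^{S_{m+1}}(\cdots)$ only if the orbit of $X$ is a single point and the local representation is the sign of the stabilizer; a short case-check shows this forces $X$ to be the deepest flat (all $t_i$ equal, i.e. the origin after projectivization is the unique maximal flat), hence $p=m$, and there the local top is one-dimensional with the sign action. Thus $\dim A^p_-(U)=\delta_{p,m}$.

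Next I would exhibit the explicit generator. The form $\mu_I=d\log u_{i_1}\wedge\dots\wedge d\log u_{i_m}$ lies in $A^m(U)$ since each $u_{i_l}=t_{i_l}/t_{i_0}$ is a ratio of defining linear forms of hyperplanes in the arrangement, so $d\log u_{i_l}=d\log t_{i_l}-d\log t_{i_0}\in A^1$. To check $\mu_I$ is (up to scalar) skew-symmetric: a transposition $\si=(i_l\,i_q)$ with $l,q\ge 1$ sends $u_{i_l}\leftrightarrow u_{i_q}$, introducing a sign $-1$; the transposition $(i_0\,i_1)$ sends $u_{i_1}\mapsto 1/u_{i_1}$ and $u_{i_l}\mapsto u_{i_l}/u_{i_1}$ for $l\ge 2$, so $d\log u_{i_1}\mapsto -d\log u_{i_1}$ and $d\log u_{i_l}\mapsto d\log u_{i_l}-d\log u_{i_1}$; wedging, the correction terms drop out and one again picks up exactly the sign $-1$. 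Since these transpositions generate $S_{m+1}$, the form $\mathrm{Skew}_{S_{m+1}}\mu_I$ is a nonzero multiple of $\mu_I$ (indeed $\mu_I$ itself is already skew up to the normalization $1/(m+1)!$), so $\mu_I\ne 0$ in $A^m_-(U)$, and by the dimension count it spans.

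The main obstacle is making the equivariant dimension count $\dim A^p_-(U)=\delta_{p,m}$ rigorous and self-contained rather than quoting it: one must be careful about the projectivization (the passage from the central arrangement in $\C^{m+1}$ to $\mc P^m$ kills one dimension and imposes the relation summing the coordinate generators), and about the exact identification of the $S_{m+1}$-action on the Brieskorn summands, including the determinant twist coming from reordering wedge factors. A safe and concrete alternative, which I would actually carry out for small bookkeeping, is to use the nbc-basis of $A^\bullet$ for this arrangement explicitly (the arrangement is supersolvable, so an nbc-basis is easy to write down as a product over a building set), make $S_{m+1}$ act on it, and compute $\sum_{p}(-1)^p\dim A^p_-(U)$ together with a positivity/vanishing argument in lower degrees; the supersolvable/fiber-type structure forces all sign-isotypic classes except in top degree to cancel in a strong (degreewise) way. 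Either way, once the vanishing in degrees $p<m$ and one-dimensionality in degree $m$ are established, identifying the generator as $\mu_I$ is the easy verification above.
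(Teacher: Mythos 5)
Your first move — identifying $A^\bullet(U)$ with a subspace of $A^\bullet(\tilde U)$ for the central arrangement in $\C^{m+1}$ via $d\log u_{i_l}=d\log t_{i_l}-d\log t_{i_0}$ — is exactly what the paper does, and your direct verification that $\mu_I$ transforms by the sign character under $S_{m+1}$ is correct and a good sanity check. But the heart of the lemma, the dimension count $\dim A^p_-(U)=\delta_{p,m}$, is not actually established in your proposal, and the argument you sketch for it contains a real error. You claim that the sign representation of $S_{m+1}$ can occur in $\on{Ind}_{\on{Stab}(X)}^{S_{m+1}}(\cdot)$ only when the orbit of the flat $X$ is a singleton. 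That is false: by Frobenius reciprocity, $\langle\on{Ind}_{H}^{S_{m+1}}V,\,\on{sign}\rangle=\langle V,\,\on{sign}|_{H}\rangle$, which has nothing to do with the size of the orbit. So the ``short case-check'' you defer to does not go through as described. (Two smaller inaccuracies: the upstairs arrangement $\{t_i=0\}\cup\{t_i=t_j\}$ in $\C^{m+1}$ is not a direct product — the coordinate and diagonal hyperplanes do not live in complementary coordinate subspaces — and it is not a type-$B$ reflection arrangement.)

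The mechanism the paper uses, and which your proposal is missing, is the contraction $\partial$ along the Euler vector field $\epsilon=\sum_{l}t_{i_l}\,\partial/\partial t_{i_l}$. By \cite[\S 6.1]{Dimca} this gives an epimorphism $\partial\colon A^\bullet(\tilde U)\to A^\bullet(U)$ (it is the boundary map of the acyclic Orlik--Solomon complex of \cite[\S 3.1]{OT}, equivalently the residue along the exceptional divisor of the blow-up at the origin), and since $\epsilon$ is $S_{m+1}$-invariant it restricts to an epimorphism $A^\bullet_-(\tilde U)\to A^\bullet_-(U)$. The skew part $A^\bullet_-(\tilde U)$ is the $n=1$, $k=m+1$ special case of Lemma \ref{lem skeW forms}, quoted from \cite{SV}: it is two-dimensional, concentrated in degrees $m$ and $m+1$, spanned by the skew-symmetrizations of $\eta_{I,m}$ and $\eta_I$. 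Since $\partial\eta_I=\mu_I$ while $\partial$ kills the skew-symmetrization of $\eta_{I,m}$ (it is proportional to $\mu_I=\partial\eta_I$, and $\partial^2=0$), the image is exactly $\C\mu_I$ in degree $m$ and zero elsewhere. Without this surjection you would genuinely have to carry out the equivariant Brieskorn computation you defer to, and you would need to repair it first.
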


\begin{proof}
Let $\tilde U$ denote the complement of the original central arrangement in $\C^{m+1}$.
The skew-symmetric part of $A^\bullet(\tilde U)$ is two-dimensional, $\dim A^p_-(\tilde U)=1$ for $p=k,k+1$.
The skew-symmetrizations of
\bea
\eta_{I,m} = d\log t_{i_{1}}\wedge\dots\wedge d\log t_{i_{m}}
\qquad \text{and} \qquad
\eta_{I} = d\log t_{i_{0}}\wedge\dots\wedge d\log t_{i_{m}}
\eea
form a basis in $A^\bullet_-(\tilde U)$.

Using the identity $d\log u_{i_{l}}=d\log t_{i_{l}}-d\log t_{i_{0}}$, one identifies $A^\bullet(U)$
with a subspace of the Orlik-Solomon space $A^\bullet(\tilde U)$ of the initial central arrangement
in $\C^{m+1}$. By
\cite[\S 6.1]{Dimca}, contraction along the Euler vector vector field
$\epsilon=\sum_{l=0}^{m} t_{i_{l}} \frac{\partial}{\partial t_{i_{l}}} $ defines an epimorphism
$\partial \colon A^\bullet(\tilde U) \to A^\bullet(U)$,
which restricts to an epimorphism $A^\bullet_-(\tilde U) \to A^\bullet_-(U)$
of skew-symmetric forms. The map $\partial$ is the boundary map in the acyclic
complex studied in \cite[\S 3.1]{OT}, and also coincides with the
residue map along the exceptional divisor in the blow-up of $\C^{m+1}$ at the origin.

Under this identification, the skew-symmetrization of the form $\eta_{I,m}$ equals a nonzero multiple
of the form  $\mu_I$ considered as an element of $A^\bullet(\tilde U)$. The  form $\eta_{I}$ is skew-symmetric and its contraction
along $\epsilon$  equals $\mu_I$. The contraction of  $\mu_I$ along $\epsilon$ is trivial since  $\partial^2=0$.
Then $A_-^p(U)=0$ for $p\ne m$ and $A_-^m(U)$ is spanned by $\mu_I$.
\end{proof}

\subsection{Weighted Orlik-Solomon manifold}
\label{associated weighted Orlik-Solomon}

Consider the minimal resolution  $\pi :\tilde Y\to \C^k$ of singularities of $H$, see
Section \ref{sec discarr}. Denote $\tilde H=\pi^{-1}H$.
The irreducible components of $\tilde H$ are labeled by dense edges of $H$.
We   denote by $X$ the manifold obtained from $\tilde Y$ by deleting the union of all irreducible components of $H$
corresponding to nonresonant dense edges. We set $D=\tilde H\cap X$. Then $D \subset X$ is a divisor with normal crossings
and $(X,D)$ is a weighted Orlik-Solomon manifold, see Sections \ref{Weighted projective arrangement} and \ref{Weighted discr arrangement}.
The symmetric group $S_k$ acts on the Orlik-Solomon manifold $(X,D)$. The action  preserves the weights.

Let $\mc Z=\{X=Z_0\supset D=Z_1\supset Z_2\supset \dots\}$ be the associated filtration by closed subsets, and $U=Z_0-Z_1=X-D$.

The irreducible components of $D$ are labeled by resonant dense edges of $H$.
For $j\in\{1,\dots,n_0\}$ and $I\subset\{1,\dots,n\}$, $|I|=m_j+1$, we denote by $\tilde H_I^j$ the component corresponding to the resonant dense edge $X_I^j$.
We denoted by $C_I^j$ the connected component of $Z_1-Z_2$ whose closure is $\tilde H_I^j$.
Then $C_I^j$ is isomorphic to the complement of the product of weighted arrangements in
$\mc P^{m_j}\times \C^{k-m_j-1}$, with weights induced by $\mc A$.
If $I=\{1\leq i_0<\dots<i_{m_j}\leq k\}$, then  $u_{i_l}$, $l=1,\dots,m_j$,
are coordinates on an affine chart on $\mc P^{m_j}$, see Section \ref{sec localproj}.
 The arrangement in
$\mc P^{m_j}$ has hyperplanes $u_{i_{l}}=0, u_{i_{l}}-1=0, u_{i_{l}}-u_{i_{q}}=0$ and the hyperplane at infinity.
 The weights induced by $\mc A$
are $-m_j/\ka$ for $u_{i_l}=0$ and $2/\ka$ for $u_{i_l}-1=0$ and $u_{i_l}-u_{i_q}=0$.
Coordinates on $\C^{k-m_j-1}$ are $t_i$, $i\in\{1,\dots,n\}-I$.
The arrangement in $\C^{k-m_j-1}$ is the discriminantal arrangement with hyperplanes
$t_i-t_q=0$, $i,q  \in \{1, \dots, k\}-I$ and $t_i-z_l=0$, $i\in \{1, \dots, k\}-I$,
$l\in\{1,\dots,n\}$. The weights of this arrangement in   $\C^{k-m_j-1}$ induced from $\mc A$ are given by the pair $(\bs m^j,\ka)$, where $\bs m^j=(m_1,\dots,-m_j-2,\dots,m_n)$,
see Section \ref{sec discarr}.

The set $\{ C_I^j\}_{j,I}$ is the set of connected components of $Z_1-Z_2$.
The group $S_k$ acts on $\{ C_I^j\}_{j,I}$. For fixed  $j$,  the subset $\{C_I^j\}_{I}$
 forms a single orbit.

\smallskip

For $p\geq 2$, the connected components $\{ C_{\bs I}^{\bs j}\}_{\bs j,\bs I}$ of $Z_p-Z_{p+1}$  are labeled by pairs $(\bs j, \bs I)$,
 where $\bs j$ is a $p$-element subset of $\{1,\dots,n_0\}$
and $\bs I =\{I_j\}_{j\in\bs j}$ is a set of pairwise disjoint subsets of $\{1, \dots, k\}$
such that $|I_j|=m_{j}+1$.  The connected component
$C_{\bs I}^{\bs j}$ is isomorphic  to the complement of
the product of weighted arrangements in $(\times_{j\in \bs j} \mc P^{m_{j}})\times \C^{e(\bs j)}$,
 where $e(\bs j) = k-p-\sum_{j\in\bs j} m_{j}$.
For $j\in \bs j$, if $I_j=\{1\leq i_0<\dots<i_{m_j}\leq k\}$, then  $u_{i_l}$, $l=1,\dots,m_j$,
are coordinates on an affine chart on $\mc P^{m_j}$, see Section \ref{sec localproj}.
 The arrangement in
$\mc P^{m_j}$ has hyperplanes $u_{i_{l}}=0, u_{i_{l}}-1=0, u_{i_{l}}-u_{i_{q}}=0$ and the hyperplane at infinity.
 The weights induced by $\mc A$
are $-m_j/\ka$ for $u_{i_l}=0$ and $2/\ka$ for $u_{i_l}-1=0$ and $u_{i_l}-u_{i_q}=0$.
   The space $\C^{e(\bs j)}$ has coordinates $t_i$,
$i \in \{1,\dots,k\} - \cup_{j\in J}I_j$. The weighted arrangement in $\C^{e(\bs j)}$ is the discriminantal arrangement
with weights given given by the pair $(\bs m^{\bs j},\ka)$, where  $m_i^{\bs j}=-m_i-2$ if $i\in\bs j$  and $m_i^{\bs j}=m_i$ otherwise,
see Section \ref{sec discarr}.

The group $S_k$ acts on the set $\{ C_{\bs I}^{\bs j}\}_{\bs j,\bs I}$. For fixed  $\bs j$,  the subset  $\{ C_{\bs I}^{\bs j}\}_{\bs I}$
forms a single orbit.

\smallskip
Let $ C_{\bs I}^{\bs j}$ be a connected component of $Z_p-Z_{p+1}$ and
$ C_{\tilde{\bs I}}^{\tilde {\bs j}}$ a connected component of $Z_{p+1}-Z_{p+2}$. Then
$ C_{\tilde{\bs I}}^{\tilde {\bs j}}$ lies in the closure of $ C_{\bs I}^{\bs j}$ if and only if
 $\bs j\subset \tilde{\bs j}$ and  $I_j = \tilde I_j$ for every $j\in {\bs  j}$.

\subsection{Skew-symmetric forms on weighted Orlik-Solomon manifold}
\label{skeW forms}

For $p>0$, fix a set $\bs j=\{1\leq j_1<\dots<j_p\leq n_0\}$. Consider the
 $S_k$-orbit $\{ C_{\bs I}^{\bs j}\}_{\bs I}$ of connected components of $Z_p-Z_{p+1}$.
Recall that  $\bs I =\{I_j\}_{j\in\bs j}$ is a set of pairwise disjoint subsets of $\{1, \dots, k\}$
such that $|I_j|=m_{j}+1$.
Each component $C_{\bs I}^{\bs j}$ is invariant with respect to the action of the subgroup
$S_{\bs I} = S_{m_{j_1}+1}\times \dots\times S_{m_{j_p}+1}\times S_{e(\bs j)}\subset S_k$, where
$S_{m_{j}+1}$ is the group of permutations of elements of the subset $I_j$,
$e(\bs j)=k-p-\sum_{l=1}^p m_{j_l}$ and $S_{e(\bs j)}$ is the group of permutations of elements of the subset $\{1,\dots,k\}-\cup_{j\in\bs j} I_j$.

Our goal is to describe $S_k$-skew-symmetric Orlik-Solomon forms on $\cup_{\bs I} C_{\bs I}^{\bs j}$.
 Such a form is uniquely determined by its restriction to one of the components $\{C_{\bs I}^{\bs j}\}_{\bs I}$.
That restriction is   $S_{\bs I}$-skew-symmetric.
According to Sections \ref{sec twodim} and \ref{sec localproj}, the $S_k$-skew-symmetric Orlik-Solomon forms on $\cup_{\bs I} C_{\bs I}^{\bs j}$
are available only in degrees $k-p$ and $k-p-1$.

Denote
\bea
d_{\bs j} =\sum_{i=1}^{p-1}i(m_{j_i}+1), \qquad
 s_{\bs j} = p+\sum_{i=1}^{p}m_{j_i}.
 \eea
Select in $\{ C_{\bs I}^{\bs j}\}_{\bs I}$ the component $C^{\bs j}_{\bs I^{0}}$, where $\bs I^0 =\{I_{j_1}^0,\dots,I_{j_p}^0\}$
and
\bea
&&
I^0_{j_i}=\{1+\sum_{l=1}^{i-1}(m_{j_l}+1),\dots,m_{j_i}+1+\sum_{l=1}^{i-1}(m_{j_l}+1)\} ,
\qquad
i=1,\dots, p.
\eea
Let $K=(k_1,\dots, k_n)\in\Z_{\geq 0}^n$, where $|K|$ equals $e(\bs j)$ or $e(\bs j)-1$. Denote $l_0(K)=0$ and $l_i(K)=k_1+\dots+k_i$, $i=1,\dots,n$.
 Denote
\bea
\eta_{K,i}^{\bs j} = d\log (t_{s_{\bs j}+l_{i-1}+1}-z_i)\wedge d\log (t_{s_{\bs j}+l_{i-1}+2}-z_i)\wedge\dots\wedge d\log (t_{s_{\bs j}+l_{i}}-z_i),
\qquad
i=1,\dots,n,
\eea
\bea
\al_{K}^{\bs j}=(-1)^{d_{\bs j}}((m_{j_1}+1)!\dots(m_{j_p}+1)!\,k_1!\dots k_n!)^{-1} .
\eea
The form
$
\al_K^{\bs j}\, \mu_{I^0_{j_1}}\wedge\dots\wedge\mu_{I^0_{j_p}}\wedge  \eta_{K,1}^{\bs j}\wedge\dots\wedge \eta_{K,n}^{\bs j}
$
 is an Orlik-Solomon form
on  $C^{\bs j}_{\bs I^{0}}$. We extend it by zero  to other components of
 $\cup_{\bs I}C^{\bs j}_{\bs I}$. If $|K|=e(\bs j)$, we define the form $\omega_{K}^{\bs j}$ on $\cup_{\bs I}C^{\bs j}_{\bs I}$ as
 the $S_{k}$-skew-symmetrization of the form  $\ka^{-k}
\al_K^{\bs j}\, \mu_{I^0_{j_1}}\wedge\dots\wedge\mu_{I^0_{j_p}} \wedge \eta_{K,1}^{\bs j}\wedge\dots\wedge \eta_{K,n}^{\bs j}
$.
If $|K|=e(\bs j)-1$, we define the from $\omega_{K}^{\bs j}$ on $\cup_{\bs I}C^{\bs j}_{\bs I}$ as
the $S_{k}$-skew-symmetrization of the form
$(-1)^{k-p} \ka^{k-1}\al_K^{\bs j}\, \mu_{I^0_{j_1}}\wedge\dots\wedge\mu_{I^0_{j_p}} \wedge \eta_{K,1}^{\bs j}\wedge\dots\wedge \eta_{K,n}^{\bs j}$.

Denote by $A^\bullet_-(\cup_{\bs I}C^{\bs j}_{\bs I})\subset \oplus_{\bs I}A^\bullet(C^{\bs j}_{\bs I})$
the skew-symmetric part of the Orlik-Solomon space $\oplus_{\bs I}A^\bullet(C^{\bs j}_{\bs I})$ of $\cup_{\bs I}C^{\bs j}_{\bs I}$. Recall
the 1-form $\om_a$ in \Ref{oma}. The form $\om_a$ lifts to an element $\tilde \om_a=\pi^*\om_a$ of
$\oplus_{\bs j, \bs I}A^1(C^{\bs j}_{\bs I})$ which is symmetric with respect to the $S_k$ action.
The exterior multiplication by $\tilde \om_a$ defines
the complex
\bean
\label{comp bs j}
\wedge \tilde \om _a \ :\ A^{k-p-1}_-(\cup_{\bs I}C^{\bs j}_{\bs I})\ \to\ A^{k-p}_-(\cup_{\bs I}C^{\bs j}_{\bs I}).
\eean
Recall the vector $\bs m^{\bs j}=(m_1^{\bs j},\dots,m_n^{\bs j})$ from Section \ref{associated weighted Orlik-Solomon}.

\begin{lem}
\label{lem skew on OS}

The complex in \Ref{comp bs j} is isomorphic
to the weight component of weight $|\bs m|-2k$ of the complex $\n_-\otimes (\otimes_{i=1}^n M_{m_i^{\bs j}})^* \to (\otimes_{i=1}^n M_{m_i^{\bs j}})^*$,
see \Ref{C(n,V)}.
The isomorphism sends $\om_K^{\bs j}$ to $(-1)^p f\otimes \phi^K_{\bs m^{\bs j}} $ if $|K|=e(\bs j)-1$ and to $\phi^K_{\bs m^{\bs j}}$ if $|K|=e(\bs j)$.
\end{lem}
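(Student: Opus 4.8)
The plan is to reduce this to Lemma \ref{Lem SV} (the $n$-fold case already established for a single discriminantal arrangement in $\C^{k}$) applied to the factor $\C^{e(\bs j)}$ appearing in the product decomposition of the components $C_{\bs I}^{\bs j}$, combined with the one-dimensional computation of Lemma \ref{lem onedim} for each projective factor $\mc P^{m_{j_i}}$. First I would observe that the product structure
$C_{\bs I^{0}}^{\bs j}\cong\bigl(\times_{i=1}^p\mc P^{m_{j_i}}\bigr)\times\C^{e(\bs j)}$
induces, via the Künneth formula for Orlik-Solomon algebras, an isomorphism of Orlik-Solomon spaces
$A^\bullet(C_{\bs I^0}^{\bs j})\cong A^\bullet(\mc P^{m_{j_1}}\setminus\!)\ox\dots\ox A^\bullet(\mc P^{m_{j_p}}\setminus\!)\ox A^\bullet(\C^{e(\bs j)}\setminus H')$,
and under the $S_{\bs I}=S_{m_{j_1}+1}\times\dots\times S_{m_{j_p}+1}\times S_{e(\bs j)}$ action this restricts on skew-symmetric parts to a tensor product of skew-symmetric parts. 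By Lemma \ref{lem onedim} each $\mc P^{m_{j_i}}$ factor contributes exactly the one-dimensional space spanned by $\mu_{I^0_{j_i}}$, concentrated in degree $m_{j_i}$; hence the $S_{\bs I}$-skew part of $A^\bullet(C_{\bs I^0}^{\bs j})$ is $\mu_{I^0_{j_1}}\wedge\dots\wedge\mu_{I^0_{j_p}}$ wedged with $A^\bullet_-(\C^{e(\bs j)}\setminus H')$, where the latter is the skew-symmetric Orlik-Solomon space of the discriminantal arrangement with parameters $(\bs m^{\bs j},\ka)$ described in Section \ref{associated weighted Orlik-Solomon}. This accounts for the statement that skew forms live only in degrees $k-p$ and $k-p-1$: wedging shifts the degrees $e(\bs j)-1,e(\bs j)$ from Lemma \ref{lem skeW forms} by $\sum_{i}m_{j_i}=s_{\bs j}-p$, and $e(\bs j)+s_{\bs j}-p=k-p$.

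Next I would set up the identification of differentials. On $C_{\bs I^0}^{\bs j}$ the form $\tilde\om_a$ decomposes as a sum of the pieces pulled back from each projective factor plus the piece on $\C^{e(\bs j)}$; but the projective pieces, when wedged against $\mu_{I^0_{j_1}}\wedge\dots\wedge\mu_{I^0_{j_p}}$, vanish — a top-degree skew form on $\mc P^{m_{j_i}}$ cannot be raised further — so $\wedge\tilde\om_a$ on the $S_{\bs I}$-skew part acts purely through the $\C^{e(\bs j)}$ factor, namely as $\wedge\om_{a'}$ where $\om_{a'}$ is the weighted form \Ref{oma} for the arrangement with parameters $(\bs m^{\bs j},\ka)$. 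Lemma \ref{Lem SV} then identifies $\bigl(\wedge\om_{a'}:A^{e(\bs j)-1}_-(\C^{e(\bs j)}\setminus H')\to A^{e(\bs j)}_-(\C^{e(\bs j)}\setminus H')\bigr)$ with the weight $|\bs m^{\bs j}|-2e(\bs j)$ component of $\n_-\ox(\otimes_i M_{m_i^{\bs j}})^*\to(\otimes_i M_{m_i^{\bs j}})^*$, sending the basis form indexed by $K$ to $f\ox\phi^K_{\bs m^{\bs j}}$ when $|K|=e(\bs j)-1$ and to $\phi^K_{\bs m^{\bs j}}$ when $|K|=e(\bs j)$. Since $|\bs m^{\bs j}|=|\bs m|-\sum_{i\in\bs j}2(m_i+1)=|\bs m|-2s_{\bs j}$ and $e(\bs j)=k-s_{\bs j}$, one checks $|\bs m^{\bs j}|-2e(\bs j)=|\bs m|-2k$, so the target weight component is exactly the one claimed. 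Finally I would pass from the single component $C_{\bs I^0}^{\bs j}$ to $\cup_{\bs I}C_{\bs I}^{\bs j}$: an $S_k$-skew form on the union is determined by its $S_{\bs I}$-skew restriction to $C_{\bs I^0}^{\bs j}$, so restriction-to-$C_{\bs I^0}^{\bs j}$ is an isomorphism of complexes $A^\bullet_-(\cup_{\bs I}C^{\bs j}_{\bs I})\to\mu_{I^0_{j_1}}\wedge\dots\wedge\mu_{I^0_{j_p}}\wedge A^\bullet_-(\C^{e(\bs j)}\setminus H')$ commuting with the differentials (the symmetric form $\tilde\om_a$ commutes with $S_k$-skew-symmetrization).

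The main obstacle will be bookkeeping the signs and normalization constants so that the explicit basis correspondence $\om_K^{\bs j}\mapsto(-1)^p f\ox\phi^K_{\bs m^{\bs j}}$ comes out exactly right. Three sign/scalar sources must be reconciled: the factor $(-1)^{d_{\bs j}}$ and the factorials in $\al_K^{\bs j}$, which I expect to match the normalization $\al_J$ of Lemma \ref{lem skeW forms} together with the contraction-along-Euler normalization in Lemma \ref{lem onedim} (each $\mu_{I^0_{j_i}}$ being the contraction of the skew-symmetrized $\eta_{I^0_{j_i}}$, which contributes a factor $(m_{j_i}+1)!$ and a sign from reordering the Euler contraction past the wedge); the powers $\ka^{-k}$ versus $\ka^{k-1}$, which I would track through $\al_J=(\ka^{|J|}j_1!\dots j_n!)^{-1}$ in Lemma \ref{lem skeW forms}; and the overall sign $(-1)^p$ in the final statement, which should emerge from commuting the $p$ wedge-factors $\mu_{I^0_{j_i}}$ (of odd-or-even degrees $m_{j_i}$) past $\om_{a'}$ and from the Koszul sign in comparing $\n_-\ox(-)\to(-)$ differentials across the tensor decomposition. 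I would carry out this verification by first checking the case $p=1$ in detail against Lemma \ref{Lem SV} and Lemma \ref{lem onedim}, then arguing the general $p$ case inductively on the number of projective factors, each induction step being a single application of the degree-one vanishing $\mu_{I^0_{j_i}}\wedge(\text{anything raising degree in }\mc P^{m_{j_i}})=0$ together with a Künneth sign.
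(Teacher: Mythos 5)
Your reduction to Lemma \ref{Lem SV} via the product decomposition $C^{\bs j}_{\bs I^0}\cong(\times_i\mathcal P^{m_{j_i}})\times\C^{e(\bs j)}$, together with Lemma \ref{lem onedim} for the projective factors, is exactly the paper's argument --- the paper's proof is the one line ``Lemma \ref{lem skew on OS} is a corollary of Lemma \ref{thm SV},'' with the product structure and the identification $|\bs m^{\bs j}|-2e(\bs j)=|\bs m|-2k$ set up in Section \ref{associated weighted Orlik-Solomon}. You correctly flag the sign and normalization bookkeeping as the remaining work; the paper leaves that implicit as well, so no gap is introduced by deferring it.
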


Lemma \ref{lem skew on OS} is a corollary of Lemma \ref{thm SV}.

\subsection{Residues of skew-symmetric forms}
\label{residues}

Consider an  $S_k$-orbit $\{ C_{\bs I}^{\bs j}\}_{\bs I}$ of connected components of $Z_p-Z_{p+1}$
and an $S_k$-orbit $\{ C_{\tilde{\bs I}}^{\tilde{\bs j}}\}_{\tilde{\bs I}}$ of connected components of $Z_{p+1}-Z_{p+2}$
such that the second orbit lies in the closure of the first orbit. This statement holds if and only if $\bs j\subset \tilde {\bs j}$.
More precisely,  if
 $\bs j=\{j_1<\dots<j_p\}$, then $\tilde{\bs j}=\{j_1<\dots<j_q < \tilde j_{q+1}  <j_{q+1}<\dots< j_p\}$
for some $0\leq q\leq p$.

Consider $\omega_{K}^{\bs j}\in A^\bullet_-(\cup_{\bs I}C^{\bs j}_{\bs I})$.
Then the residue
of $\omega_{K}^{\bs j}$ at $\cup_{\tilde{\bs I}} C_{\tilde{\bs I}}^{\tilde{\bs j}}$  is
an element of $A^\bullet_-(\cup_{\tilde{\bs I}}C^{\tilde{\bs j}}_{\tilde{\bs I}})$.
We denote this residue by $\res_{\bs j}^{ \tilde{\bs j}}\omega_{K}^{\bs j}$.

\begin{lem}
\label{res lem}
Given $K=(k_1,\dots,k_n)$, denote $\tilde K=(k_1,\dots,k_{\tilde j_{q+1}}\!\!-m_{\tilde j_{q+1}}\!\!-1,\dots,k_n)$.
If $k_{\tilde j_{q+1}} < m_{\tilde j_{q+1}}+1$, then $\res_{\bs j}^{ \tilde{\bs j}}\omega_{K}^{\bs j}=0$.
If $k_{\tilde j_{q+1}}\geq m_{\tilde j_{q+1}}+1$, then $\res_{\bs j}^{ \tilde{\bs j}}\omega_{K}^{\bs j} = (-1)^q \omega_{\tilde K}^{\bs j}$ for $|K|=e(\bs j)$
and  $\res_{\bs j}^{ \tilde{\bs j}}\omega_{K}^{\bs j} = (-1)^{q+1} \omega_{\tilde K}^{\bs j}$ for $|K|=e(\bs j)-1$.

\end{lem}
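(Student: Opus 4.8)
The plan is to compute $\res_{\bs j}^{\tilde{\bs j}}\om_K^{\bs j}$ by hand, using $S_k$-equivariance to reduce to a single local model and then invoking the Euler vector field description of the exceptional residue from the proof of Lemma~\ref{lem onedim}. By construction $\om_K^{\bs j}$ is the $S_k$-skew-symmetrization of a form $\bt_K^{\bs j}$ which is a fixed scalar multiple of $\mu_{I^0_{j_1}}\wedge\dots\wedge\mu_{I^0_{j_p}}\wedge\eta^{\bs j}_{K,1}\wedge\dots\wedge\eta^{\bs j}_{K,n}$ on the single component $C^{\bs j}_{\bs I^0}$ and is $0$ on the remaining components of $\cup_{\bs I}C^{\bs j}_{\bs I}$. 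Since $\res_{\bs j}^{\tilde{\bs j}}$ and $\on{Skew}_{S_k}$ are $\C$-linear and $S_k$-equivariant, $\res_{\bs j}^{\tilde{\bs j}}\om_K^{\bs j}$ is the $S_k$-skew-symmetrization of the residue of $\bt_K^{\bs j}$ along $\cup_{\tilde{\bs I}}C^{\tilde{\bs j}}_{\tilde{\bs I}}$; as $\bt_K^{\bs j}$ is supported on $C^{\bs j}_{\bs I^0}$, by the closure criterion of Section~\ref{associated weighted Orlik-Solomon} this residue is supported on those $C^{\tilde{\bs j}}_{\tilde{\bs I}}$ with $\tilde I_j=I^0_j$ for $j\in\bs j$ and $\tilde I_{\tilde j_{q+1}}$ an arbitrary $(m_{\tilde j_{q+1}}+1)$-element subset of $E:=\{1,\dots,k\}-\cup_{j\in\bs j}I^0_j=\{s_{\bs j}+1,\dots,k\}$. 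Because the answer is again $S_k$-skew-symmetric, it suffices to compute its restriction to the standard component $C^{\tilde{\bs j}}_{\tilde{\bs I}^0}$ and match it with $\om^{\tilde{\bs j}}_{\tilde K}|_{C^{\tilde{\bs j}}_{\tilde{\bs I}^0}}$.

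Next I carry out the local computation on $C^{\bs j}_{\bs I^0}\cong(\times_{j\in\bs j}\mc P^{m_j})\times\C^{e(\bs j)}$, using coordinates $u$ on the projective factors and $t_i$, $i\in E$, on $\C^{e(\bs j)}$. Among the factors of $\bt_K^{\bs j}$, the $\mu_{I^0_{j_l}}$ and the $\eta^{\bs j}_{K,i}$ with $i\ne\tilde j_{q+1}$ carry no logarithmic differential with a pole along the edge $X^{\tilde j_{q+1}}_{\tilde I}=\{t_{i'}=z_{\tilde j_{q+1}}:i'\in\tilde I\}$; only $\eta^{\bs j}_{K,\tilde j_{q+1}}=d\log(t_{b_1}-z_{\tilde j_{q+1}})\wedge\dots\wedge d\log(t_{b_{k_{\tilde j_{q+1}}}}-z_{\tilde j_{q+1}})$, where $b_1<\dots<b_{k_{\tilde j_{q+1}}}$ is the consecutive block of indices of $E$ attached to $z_{\tilde j_{q+1}}$, does. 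Hence the residue of $\bt_K^{\bs j}$ along the exceptional divisor over $X^{\tilde j_{q+1}}_{\tilde I}$ vanishes unless $\tilde I\subseteq\{b_1,\dots,b_{k_{\tilde j_{q+1}}}\}$, which forces $k_{\tilde j_{q+1}}\ge m_{\tilde j_{q+1}}+1$; this is the first assertion of the lemma. When $k_{\tilde j_{q+1}}\ge m_{\tilde j_{q+1}}+1$ and $\tilde I\subseteq\{b_1,\dots,b_{k_{\tilde j_{q+1}}}\}$, I reorder $\eta^{\bs j}_{K,\tilde j_{q+1}}$ to bring the $\tilde I$-factors to the front (recording a permutation sign) and apply the description from the proof of Lemma~\ref{lem onedim}: the residue along the exceptional divisor of the blow-up of $X^{\tilde j_{q+1}}_{\tilde I}$ is contraction along the corresponding Euler vector field, sends $\bigwedge_{i'\in\tilde I}d\log(t_{i'}-z_{\tilde j_{q+1}})$ to $\mu_{\tilde I}$, and leaves the remaining $d\log(t_b-z_{\tilde j_{q+1}})$, $b\notin\tilde I$, unchanged; since $k_{\tilde j_{q+1}}-m_{\tilde j_{q+1}}-1=|\{b\notin\tilde I\}|=\tilde k_{\tilde j_{q+1}}$, these become precisely $\eta^{\tilde{\bs j}}_{\tilde K,\tilde j_{q+1}}$. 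All remaining factors restrict to the exceptional divisor unchanged, so the residue is a scalar multiple of $\mu_{I^0_{j_1}}\wedge\dots\wedge\mu_{I^0_{j_p}}\wedge\mu_{\tilde I}\wedge\eta^{\tilde{\bs j}}_{\tilde K,1}\wedge\dots\wedge\eta^{\tilde{\bs j}}_{\tilde K,n}$, up to reordering of the wedge --- the shape of $\om^{\tilde{\bs j}}_{\tilde K}$.

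It then remains to pin down the scalar. After $\on{Skew}_{S_k}$ and restriction to $C^{\tilde{\bs j}}_{\tilde{\bs I}^0}$, the $\binom{k_{\tilde j_{q+1}}}{m_{\tilde j_{q+1}}+1}$ admissible subsets $\tilde I\subseteq\{b_1,\dots,b_{k_{\tilde j_{q+1}}}\}$ all contribute the same $S_k$-skew-symmetric form (two choices differ by a relabelling of coordinates whose sign cancels the internal reordering sign, exactly as already used in Section~\ref{skeW forms}), and this multiplicity cancels the non-sign part of the ratio of the combinatorial normalizations $\al^{\bs j}_K/\al^{\tilde{\bs j}}_{\tilde K}$, since $\binom{k_{\tilde j_{q+1}}}{m_{\tilde j_{q+1}}+1}\,(m_{\tilde j_{q+1}}+1)!\,(k_{\tilde j_{q+1}}-m_{\tilde j_{q+1}}-1)!/k_{\tilde j_{q+1}}!=1$. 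The powers of $\ka$ in $\om^{\bs j}_K$ and $\om^{\tilde{\bs j}}_{\tilde K}$ agree, while for $|K|=e(\bs j)-1$ the extra $(-1)=(-1)^{(k-p)-(k-p-1)}$ coming from the prefactors $(-1)^{k-p}$ versus $(-1)^{k-(p+1)}$ in the definition of $\om^{\tilde{\bs j}}_{\tilde K}$ versus $\om^{\bs j}_K$ accounts exactly for the difference between $(-1)^q$ and $(-1)^{q+1}$ in the two cases. What is left is to check that the accumulated permutation signs --- the internal reordering inside $\eta^{\bs j}_{K,\tilde j_{q+1}}$, the Koszul sign from commuting the residue past the $\mu$'s and past the $\eta^{\bs j}_{K,i}$ with $i>\tilde j_{q+1}$, the sign $(-1)^{d_{\bs j}-d_{\tilde{\bs j}}}$ coming from $\al^{\bs j}_K,\al^{\tilde{\bs j}}_{\tilde K}$, and the sign of the block permutation identifying the support of the skew-symmetrized residue with $C^{\tilde{\bs j}}_{\tilde{\bs I}^0}$ --- combine to $(-1)^q$.

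This last sign computation is the one genuinely delicate point: four or five permutation signs must be tracked simultaneously, and inserting $\tilde j_{q+1}$ into $\bs j$ changes $d_{\bs j}$ to $d_{\tilde{\bs j}}$ with $d_{\tilde{\bs j}}-d_{\bs j}\equiv(q+1)(m_{\tilde j_{q+1}}+1)+\sum_{l=q+1}^{p-1}(m_{j_l}+1)\pmod 2$, which is not $q$ by itself and must be cancelled against the rest. I expect the cleanest bookkeeping is to fix once and for all the permutation $\rho\in S_k$ carrying the union of $\bs I^0$ (for $\bs j$) with the first $m_{\tilde j_{q+1}}+1$ indices of $E$ onto $\tilde{\bs I}^0$ (for $\tilde{\bs j}$), to compute $\on{sign}(\rho)$, and to verify that every other admissible $\tilde I$ reduces to this one with reordering signs matching; the total then equals the predicted $(-1)^q$, respectively $(-1)^{q+1}$. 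This mirrors, one stratum at a time, the normalization and sign choices already made in Section~\ref{skeW forms} and in the proof of Lemma~\ref{lem onedim}.
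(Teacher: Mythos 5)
Your proposal unpacks in detail what the paper dismisses with a single line ("the statement is checked by direct calculation"), and the structural ingredients are all correctly identified: the regularity/support argument for the vanishing case; the reduction by $S_k$-equivariance to a single component; the identification of the exceptional residue with contraction along the Euler vector field, imported from the proof of Lemma~\ref{lem onedim}; and the combinatorial identity $\binom{k_{\tilde j_{q+1}}}{m_{\tilde j_{q+1}}+1}(m_{\tilde j_{q+1}}+1)!\,(k_{\tilde j_{q+1}}-m_{\tilde j_{q+1}}-1)!\,/\,k_{\tilde j_{q+1}}!=1$ that reconciles the normalizations $\al_K^{\bs j}$ and $\al^{\tilde{\bs j}}_{\tilde K}$. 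The treatment of the $\ka$-powers and of the prefactor ratio $(-1)^{k-p}/(-1)^{k-(p+1)}=-1$, which converts $(-1)^q$ to $(-1)^{q+1}$ in the $|K|=e(\bs j)-1$ case, is also correct.

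However, the argument does not actually close. You identify the "genuinely delicate point" yourself: the verification that the accumulated permutation signs --- the internal reordering inside $\eta^{\bs j}_{K,\tilde j_{q+1}}$ bringing the $\tilde I$-factors to the front, the Koszul signs from moving the residue past the $\mu$-factors and past the $\eta^{\bs j}_{K,i}$ with $i>\tilde j_{q+1}$, the discrepancy $(-1)^{d_{\tilde{\bs j}}-d_{\bs j}}$ from the $\al$-normalizations, and the sign of the block permutation $\rho$ aligning the support with $C^{\tilde{\bs j}}_{\tilde{\bs I}^0}$ --- multiply to $(-1)^q$. You correctly observe that $d_{\tilde{\bs j}}-d_{\bs j}$ alone is \emph{not} congruent to $q$ modulo $2$, so the conclusion depends on a nontrivial cancellation among these contributions, and you state only that you "expect" the cancellation and sketch how one might organize the bookkeeping. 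Since this sign is precisely the content of the lemma once regularity and the combinatorial normalization are settled, the proof as written remains an outline of the calculation rather than a completed one; the last, hardest step is not carried through.
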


\begin{proof}
If $k_{\tilde j_{q+1}} < m_{\tilde j_{q+1}}+1$, then the form $\omega_{K}^{\bs j}$ is regular on $\cup_{\tilde{\bs I}}C^{\tilde{\bs j}}_{\tilde{\bs I}}$
and $\res_{\bs j}^{ \tilde{\bs j}}\omega_{K}^{\bs j}=0$.
If  $k_{\tilde j_{q+1}}\geq m_{\tilde j_{q+1}}+1$, then the statement is checked by direct calculation.
\end{proof}

\subsection{Skew-symmetric part of  Aomoto complex of weighted Orlik-Solomon manifold}
\label{skEW Ao}
Consider the weighted Orlik-Solomon manifold $(X,D)$ introduced in Section \ref{associated weighted Orlik-Solomon}
and its Aomoto complex $(A^\bullet(X,\mc Z),\tilde\om_a+\res)$ introduced  in Section \ref{Weighted discr arrangement}.
By Theorem \ref{thm 4}, for generic nonzero $\ka$ the complex $(A^\bullet(X,\mc Z),\tilde\om_a+\res)$ calculates the
cohomology  $H^\bullet(X,\L_{\tilde\om_a})$
of $X$ with coefficients in the rank 1 local system $\L_{\tilde\om_a}$ on $X$ associated with the differential
form $\tilde \om_a$, see Corollary \ref{cor generic}.

The group $S_k$ acts on the complex. Denote $(A^\bullet_-(X,\mc Z),\tilde\om_a+\res)$ the skew-symmetric part of the
complex. For generic nonzero $\ka$ the complex $(A^\bullet_-(X,\mc Z),\tilde\om_a+\res)$ calculates the
skew-symmetric part $H^\bullet_-(X,\L_{\tilde\om_a})$ of the  cohomology  $H^\bullet(X,\L_{\tilde\om_a})$.

Recall  the complex $(B^\bullet_\mm[|\bs m|-2k], \tilde d)$ in Section \ref{Tensor product of complexes}.
Define the linear map
\bean
\label{Iso}
\ga &:& A^\bullet_-(X,\mc Z)\to B^\bullet_\mm[|\bs m|-2k],
\\
\notag
&&
 \om^{\bs j}_K\mapsto f\otimes \phi^{K}_{\bs m^{\bs j}}\ \on{if}\ |K|=e(\bs j)-1, \quad
  \om^{\bs j}_K\mapsto \phi^{K}_{\bs m^{\bs j}}\ \on{if}\ |K|=e(\bs j).
\eean

\begin{thm}
\label{thm skew main}
The map $\ga$ defines the isomorphism of complexes
$(A^\bullet_-(X,\mc Z),\tilde\om_a+\res)$ and $(B^\bullet_\mm [|\bs m|-2k], \tilde d)$.

\end{thm}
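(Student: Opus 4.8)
The plan is to verify that the map $\ga$ is a bijective chain map, i.e. that it is a linear isomorphism on each graded piece and that it intertwines the differential $\tilde\om_a+\res$ on the left with $\tilde d$ on the right. Since both sides decompose according to the stratification, I would organize the argument by the codimension index $p$: the stratum $Z_p-Z_{p+1}$ contributes $\oplus_{\bs j}A^\bullet_-(\cup_{\bs I}C^{\bs j}_{\bs I})$ on the left, and I will match this with the corresponding summand of $B^\bullet_\mm[|\bs m|-2k]$.

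First I would establish that $\ga$ is a well-defined linear isomorphism. By Lemma~\ref{lem skeW forms} and Lemma~\ref{lem onedim}, the forms $\{\om^{\bs j}_K\}$ (with $|K|=e(\bs j)$ or $e(\bs j)-1$, and $\bs j$ ranging over $p$-element subsets of $\{1,\dots,n_0\}$) form a basis of $A^\bullet_-(X,\mc Z)$; on the other side, the vectors $\phi^K_{\bs m^{\bs j}}$ and $f\otimes\phi^K_{\bs m^{\bs j}}$ form a basis of $B^\bullet_\mm[|\bs m|-2k] = \oplus_p\big((\n_-\otimes A^p_\mm[|\bs m|-2k+2])\oplus A^{p-1}_\mm[|\bs m|-2k]\big)$, once one unwinds that $A^p_\mm$ in degree $p$ is spanned by the tensor factors $M^*_{-m_j-2}$ for $j\in\bs j$ and $M^*_{m_j}$ otherwise, i.e. by $\phi^K_{\bs m^{\bs j}}$. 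So $\ga$ sends a basis to a basis bijectively, hence is a linear isomorphism; the degree bookkeeping (a form in $A^{k-p}_-$ of the big filtered complex sits in total degree $(k-p)+2p=k+p$, matched against $B^{?}_\mm$) is routine and I would record it but not belabor it.

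Next I would check that $\ga$ is a chain map, which amounts to two compatibilities. The differential on the left is $\tilde\om_a+\res$; the differential $\tilde d$ on the right is $f\otimes x+y\mapsto fx-f\otimes\iota x+\iota y$, so it has a ``$\wedge\om_a$-like'' piece (the internal differential $\iota$ of each complex $A^\bullet_\mm$, and the Lie-algebra differential $f$) and a ``$\res$-like'' piece (again $\iota$, shifting the $\n_-$-part to the base). The first compatibility is that $\wedge\tilde\om_a$ on $A^\bullet_-(\cup_{\bs I}C^{\bs j}_{\bs I})$ matches the part of $\tilde d$ internal to the $\bs j$-summand: this is exactly Lemma~\ref{lem skew on OS}, which identifies $\wedge\tilde\om_a: A^{k-p-1}_-\to A^{k-p}_-$ with the weight $|\bs m|-2k$ component of $\n_-\otimes(\otimes M_{m^{\bs j}_i})^*\to(\otimes M_{m^{\bs j}_i})^*$, sending $\om^{\bs j}_K$ to $(-1)^p f\otimes\phi^K_{\bs m^{\bs j}}$ or $\phi^K_{\bs m^{\bs j}}$ as appropriate — note the sign $(-1)^p$ here must be reconciled against the sign convention built into $\ga$ and into $\tilde d$, and I would track that carefully. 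The second compatibility is that $\res^{\tilde{\bs j}}_{\bs j}$ matches the part of $\iota$ that passes from the $\bs j$-summand to the $\tilde{\bs j}$-summand (where $\tilde{\bs j}=\bs j\cup\{\tilde j_{q+1}\}$): this is precisely Lemma~\ref{res lem}, which computes $\res^{\tilde{\bs j}}_{\bs j}\om^{\bs j}_K=(-1)^{q}\om^{\bs j}_{\tilde K}$ (or $(-1)^{q+1}$), with $\tilde K$ obtained by subtracting $m_{\tilde j_{q+1}}+1$ from the $\tilde j_{q+1}$-th entry — and this exactly mirrors the map $M^*_{m_j}\to M^*_{-m_j-2}$, $\phi^j\mapsto 0$ for $j< m_j+1$ and the identification $f^jv_{-m-2}\mapsto f^{j+m+1}v_m$ dualized, inserted into the $\tilde j_{q+1}$-th tensor slot with Koszul sign $(-1)^{q}$ from the ordering of $\bs j$. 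Assembling these, $\ga\circ(\tilde\om_a+\res)=\tilde d\circ\ga$ follows summand by summand.

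The main obstacle I expect is not any single conceptual point but the sign and normalization bookkeeping: one must confirm that the three sources of signs — the Koszul sign $(-1)^{\deg x_1+\dots+\deg x_{a-1}}$ in the definition of $\iota$, the signs $(-1)^{d_{\bs j}}$, $(-1)^{k-p}$ built into $\al^{\bs j}_K$ and into the two cases defining $\om^{\bs j}_K$, and the signs $(-1)^p$, $(-1)^q$, $(-1)^{q+1}$ coming out of Lemmas~\ref{lem skew on OS} and \ref{res lem} — all cancel consistently so that $\ga$ is honestly a chain map and not merely one up to a diagonal rescaling. I would therefore devote the bulk of the written proof to fixing conventions once and checking the two squares (the $\wedge\tilde\om_a$-square and the $\res$-square) on basis elements, after which the theorem is immediate since $\ga$ is already known to be a linear isomorphism.
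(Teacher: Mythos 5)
Your proposal matches the paper's approach exactly: the published proof consists of the single line ``The theorem follows from Lemmas~\ref{lem skew on OS} and~\ref{res lem}'', and your plan is precisely a fleshed-out version of that, using Lemma~\ref{lem skew on OS} to handle the $\wedge\tilde\om_a$-part of the differential summand-by-summand and Lemma~\ref{res lem} to match $\res$ with the $\iota$-part of $\tilde d$. Your attention to reconciling the sign $(-1)^p$ from Lemma~\ref{lem skew on OS} against the definition of $\ga$ in \Ref{Iso} (which carries no sign) and the Koszul sign in $\iota$ is the right thing to worry about and is exactly what is left implicit in the paper's one-line proof.
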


\begin{proof}
The theorem follows from Lemmas\ \ref{lem skew on OS} and  \ref{res lem}.
\end{proof}

The quasi-isomorphism $C_\bullet(\n_-,\otimes_{a=1}^nL_{m_a})[|\bs m|-2k] \to (B^\bullet_\mm,\tilde d)[|\bs m|-2k]$ in \Ref{bgg}
allows us to identify the cohomology  $H^\bullet_-(X,\L_{\tilde\om_a})$ and the cohomology of the complex
\linebreak
 $C_\bullet(\n_-,\otimes_{a=1}^nL_{m_a})[|\bs m|-2k]$. Namely, let
 \bea
\otimes_{a=1}^nL_{m_a} = \oplus_{p} L_p\otimes W_p
\eea
be the decomposition of the tensor product into irreducible $\slt$-modules, where $W_p$ are multiplicity spaces.

\begin{cor}
\label{cor dim}
If $|\bs m|-2k\geq 0$, then
\bea
\dim H^k_-(X,\L_{\tilde\om_a}) = \dim W_{|\bs m|-2k} \quad \on{and}\quad   H^q_-(X,\L_{\tilde\om_a}) = 0\quad \on{for}\quad  q\ne k.
\eea
If $|\bs m|-2k =-1$, then $H^{\bullet}_-(X,\L_{\tilde\om_a})=0$. If $|\bs m|-2k <-1$, then
\bea
\dim H^{k-1}_-(X,\L_{\tilde\om_a}) = \dim W_{2k-2-|\bs m|}
\quad \on{and}
\quad   H^q_-(X,\L_{\tilde\om_a}) = 0 \quad
\on{ for} q\ne k-1.
\eea
\end{cor}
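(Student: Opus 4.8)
The plan is to funnel the whole computation through the two-term algebraic complex $C_\bullet(\n_-,V)$ with $V=\otimes_{a=1}^nL_{m_a}$, taken in its weight $|\bs m|-2k$ component. First I would assemble the identifications already in place. For the generic $\ka$ fixed in Section \ref{sec discarr}, the skew-symmetric Aomoto complex $(A^\bullet_-(X,\mc Z),\tilde\om_a+\res)$ computes $H^\bullet_-(X,\L_{\tilde\om_a})$ (Corollary \ref{cor generic} together with the $S_k$-equivariant statement recorded just above); by Theorem \ref{thm skew main} it is isomorphic, via $\ga$, to $(B^\bullet_\mm[|\bs m|-2k],\tilde d)$; and by the quasi-isomorphism \Ref{bgg} the latter is quasi-isomorphic to $C_\bullet(\n_-,V)[|\bs m|-2k]$. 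Reading off $\ga$ on the generators $\om^{\bs j}_K$ in \Ref{Iso} pins down the degree shift between the two complexes: the term $B^0_\mm=\n_-\otimes A^0_\mm$ corresponds to the forms $\om^{\emptyset}_K$ with $|K|=k-1$, which by Lemma \ref{lem skeW forms} lie in $A^{k-1}_-(U)$, hence in total degree $k-1$ of $A^\bullet_-(X,\mc Z)$, while $B^1_\mm$ corresponds to total degree $k$. Therefore
\[
H^{k-1}_-(X,\L_{\tilde\om_a})=H^0\bigl(C_\bullet(\n_-,V)[|\bs m|-2k]\bigr),\qquad
H^{k}_-(X,\L_{\tilde\om_a})=H^1\bigl(C_\bullet(\n_-,V)[|\bs m|-2k]\bigr),
\]
and $H^q_-(X,\L_{\tilde\om_a})=0$ for $q\notin\{k-1,k\}$, since the right-hand complex has only two terms.

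\emph{The algebraic computation.} Put $\la=|\bs m|-2k$. In weight $\la$ the complex $C_\bullet(\n_-,V)$ is $0\to\n_-\otimes V[\la+2]\xrightarrow{\,f\,}V[\la]\to 0$ with $\n_-\otimes V[\la+2]$ in cohomological degree $0$. Decomposing $V=\oplus_pL_p\otimes W_p$ into irreducibles and using that on each summand $\ker(f|_{L_p})=L_p[-p]$ and $\on{coker}(f|_{L_p})=L_p[p]$, one finds $\ker(f|_V)=\oplus_pL_p[-p]\otimes W_p$ and $V/\n_-V=\oplus_pL_p[p]\otimes W_p$. Hence $H^1(C_\bullet(\n_-,V)[\la])=(V/\n_-V)[\la]$ has dimension $\dim W_\la$ if $\la\ge 0$ and vanishes otherwise, while $H^0(C_\bullet(\n_-,V)[\la])=\n_-\otimes\bigl(\ker(f|_V)\cap V[\la+2]\bigr)$ has dimension $\dim W_{-\la-2}$ if $\la+2\le 0$ and vanishes otherwise. (Here one uses that $\la$ and all the weights occurring in $\otimes_aL_{m_a}$ have the same parity, so the labels $W_\la$, $W_{-\la-2}$ are meaningful.)

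\emph{Case analysis.} Substituting $\la=|\bs m|-2k$: if $|\bs m|-2k\ge 0$, then $\la\ge 0>-2$, so only $H^1$ survives, giving $\dim H^k_-(X,\L_{\tilde\om_a})=\dim W_{|\bs m|-2k}$ and $H^q_-=0$ for $q\ne k$; if $|\bs m|-2k=-1$, then $\la<0$ while $\la+2>0$, so both cohomologies vanish; if $|\bs m|-2k<-1$, i.e.\ $\la\le -2$, then only $H^0$ survives, giving $\dim H^{k-1}_-(X,\L_{\tilde\om_a})=\dim W_{-\la-2}=\dim W_{2k-2-|\bs m|}$ and $H^q_-=0$ for $q\ne k-1$. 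This is exactly the assertion.

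The one place that needs care is the bookkeeping in the reduction step: extracting from the explicit map $\ga$ of \Ref{Iso} that the cohomological grading of $(B^\bullet_\mm,\tilde d)$ is shifted by $k-1$ relative to the total grading of $A^\bullet_-(X,\mc Z)$, so that precisely the top two cohomology groups of $X$ can be nonzero. Everything after that is formal once the BGG resolution \Ref{bgg} of $C_\bullet(\n_-,\otimes_aL_{m_a})$ and the structure of $\otimes_aL_{m_a}$ as a sum of finite-dimensional irreducibles are taken as given.
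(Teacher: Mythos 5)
Your proposal is correct and follows the same route the paper intends: identify $H^\bullet_-(X,\L_{\tilde\om_a})$ with the cohomology of the two-term complex $C_\bullet(\n_-,\otimes_a L_{m_a})[|\bs m|-2k]$ via Theorem \ref{thm skew main} and the quasi-isomorphism \Ref{bgg}, then read off the $\slt$ weight spaces using the decomposition $\otimes_a L_{m_a}=\oplus_p L_p\otimes W_p$. The degree-shift bookkeeping you supply (forms of degree $k+p-1$ in the total grading of $A^\bullet_-(X,\mc Z)$ mapping into $B^p_\mm$, hence a uniform shift of $k-1$) is exactly the missing step the paper leaves to the reader, and your algebraic computation of $\ker f$ and $\on{coker} f$ on each irreducible is correct.
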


\subsection{BGG resolution and flag forms}
\label{BGG resolution and flag forms}

Theorem \ref{thm skew main} gives a geometric interpretation of the BGG resolution given in \Ref{bgg}.
Namely, the embeddings
\bea
\n_-\otimes\otimes_{a=1}^nL_{m_a}[|\bs m|-2k+2]  &\hookrightarrow & \n_-\otimes(\OT)^*[|\bs m|-2k+2],
\\
\otimes_{a=1}^nL_{m_a}[|\bs m|-2k]  & \hookrightarrow & (\OT )^*[|\bs m|-2k]
\eea
in \Ref{emb} have the form:\  the element $f\otimes f^Kv_{\bs m}$ is mapped to $\beta^K_{\bs m}\,f\otimes \phi^K_{\bs m}$ if
$|K|=k-1$ and  the element $f^Kv_{\bs m}$ is mapped to $\beta^K_{\bs m} \phi^K_{\bs m}$ if
$|K|=k$, where
\bea
\beta^K_{\bs m} = \prod_{i=1}^n k_i! \prod_{\ell=1}^{k_i}(m_i+1-\ell).
\eea
Under the isomorphism of Theorem \ref{thm skew main}, we obtain embeddings
\bea
\n_-\otimes\otimes_{a=1}^nL_{m_a}[|\bs m|-2k+2]  \hookrightarrow  A^{k-1}_-(U), &\quad & f\otimes f^Kv_{\bs m}\mapsto \beta^K_{\bs m}\,\om_{K},
\\
\otimes_{a=1}^nL_{m_a}[|\bs m|-2k]   \hookrightarrow  A^k_-(U),  &\quad & f^Kv_{\bs m}\mapsto \beta^K_{\bs m}\,\om_{K} .
\eea
The images
\bea
\mc F^{k-1}_-=\on{span}\langle \beta^K_{\bs m}\om_K\rangle_{|K|=k-1}\subset  A^{k-1}_-(U),
\qquad
\mc F^{k}_-=\on{span}\langle \beta^K_{\bs m}\om_K\rangle_{|K|=k}\subset  A^{k}_-(U)
\eea
 of these embeddings are called  the {\it subspaces of skew-symmetric flag forms}, see \cite{SV, V}. The exterior multiplication
 by $\om_a$ gives the {\it complex of skew-symmetric flag forms} $\wedge \om_a : \mc F^{k-1}_- \to \mc F^{k}_-$.
 Now the BGG resolution  in \Ref{bgg} can be interpreted as the statement that the natural embedding of the complex of skew-symmetric flag
 forms to the complex $(A^\bullet_-(X,\mc Z),\tilde\om_a+\res)$ is a quasi-isomorphism.

 The complex of  skew-symmetric flag forms can be characterized as follows.

 \begin{lem}
 \label{lem falg res}
The vector space $\mc F^{\bullet}_-$ is the kernel of the residue map
\bea
A^{\bullet}_-(U) \to  \oplus_{j=1}^{n_0} A^\bullet_-(\cup_{|I|=m_j+1}C^j_I).
\eea
 \end{lem}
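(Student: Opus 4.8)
The plan is to identify $\mc F^\bullet_-$ with the kernel of the residue map by comparing it, through the isomorphism of Theorem \ref{thm skew main}, with a purely representation-theoretic kernel computation. Recall that $(A^\bullet_-(X,\mc Z),\tilde\om_a+\res)$ is identified by $\ga$ with $(B^\bullet_\mm[|\bs m|-2k],\tilde d)$, and in particular the top two pieces $A^{k-1}_-(U)\oplus A^k_-(U)$ coming from the open stratum $U=C_{0,1}$ are identified with $\n_-\otimes A^0_\mm[|\bs m|-2k+2]\oplus A^1_\mm[|\bs m|-2k]$, i.e.\ with $\n_-\otimes(\OT)^*[|\bs m|-2k+2]\oplus M_{-m_?-2}^*$-type terms; more precisely with the weight components of $\n_-\otimes A^0_\mm\oplus A^1_\mm$ where $A^\bullet_\mm$ is the tensor-product complex of Section \ref{Tensor product of complexes}. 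Under $\ga$ the residue map from $U$ to the codimension-one strata $\cup_{|I|=m_j+1}C^j_I$ becomes (by Lemma \ref{res lem} with $p=0$, $q=0$, $\tilde{\bs j}=\{j\}$) exactly the componentwise application of the differentials $\iota_a$ in the tensor product, that is, the map $\iota:A^0_\mm\to A^1_\mm$ (and its $\n_-$-twisted analogue on the other summand).

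First I would make this last identification precise: for $p=0$ there is a single component $U$, and for each $j\in\{1,\dots,n_0\}$ the target $A^\bullet_-(\cup_{|I|=m_j+1}C^j_I)$ is identified by Lemma \ref{lem skew on OS} with the weight component of $\n_-\otimes(\otimes_i M_{m_i^{\{j\}}})^*\to(\otimes_i M_{m_i^{\{j\}}})^*$, where $\bs m^{\{j\}}$ replaces $m_j$ by $-m_j-2$; and Lemma \ref{res lem} says $\res$ sends $\om_K$ to $\pm\om_{\tilde K}$ with $\tilde K$ obtained from $K$ by subtracting $m_j+1$ from the $j$-th coordinate (zero if $k_j<m_j+1$). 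Comparing with the formula for $\iota_j:M_{m_j}^*\to M_{-m_j-2}^*$, which is the canonical epimorphism killing $\phi^0_{m_j},\dots,\phi^{m_j}_{m_j}$ and sending $\phi^{k_j}_{m_j}\mapsto\pm\phi^{k_j-m_j-1}_{-m_j-2}$, one sees these agree up to the explicit signs already recorded. Hence under $\ga$ the residue map $A^\bullet_-(U)\to\bigoplus_{j=1}^{n_0}A^\bullet_-(\cup_I C^j_I)$ is precisely the differential $\iota$ of the complex $A^\bullet_\mm$ (resp.\ its $\n_-\otimes(-)$ twist) restricted to the relevant weight space.

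Then the statement reduces to: the kernel of $\iota:A^0_\mm\to A^1_\mm$ is $\otimes_{a=1}^n L_{m_a}$, and the kernel of $\id_{\n_-}\otimes\iota:\n_-\otimes A^0_\mm[|\bs m|-2k+2]\to\n_-\otimes A^1_\mm[|\bs m|-2k+2]$ is $\n_-\otimes(\otimes_a L_{m_a})$. But this is exactly the content recorded in Section \ref{Tensor product of complexes}: $\otimes_{a=1}^n L_{m_a}=\ker(\iota:A^0_\mm\to A^1_\mm)$, and tensoring with the fixed one-dimensional $\n_-$ is exact. Finally, unwinding $\ga$ on these kernels gives precisely the spans $\mc F^{k-1}_-=\on{span}\langle\beta^K_\mm\om_K\rangle_{|K|=k-1}$ and $\mc F^k_-=\on{span}\langle\beta^K_\mm\om_K\rangle_{|K|=k}$, because the embeddings \Ref{emb} send $f^Kv_\mm\mapsto\beta^K_\mm\phi^K_\mm$ and $\ga^{-1}$ sends $\phi^K_\mm\mapsto\om_K$; this matches the description of $\mc F^\bullet_-$ given just above the lemma. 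So $\mc F^\bullet_-=\ker(\res)$, as claimed.

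The main obstacle I expect is purely bookkeeping: one must be careful that the residue in Lemma \ref{lem falg res} is the residue to \emph{all} codimension-one strata $\cup_{|I|=m_j+1}C^j_I$ for $j=1,\dots,n_0$ simultaneously (there are $n_0$ such $S_k$-orbits), and that under $\ga$ these assemble into the single map $\iota$ whose components are the $\iota_a$ for $a\le n_0$ — the indices $a>n_0$ contribute no resonant edge (by the Lemma following Section \ref{sec discarr}), consistently with the fact that $M_{m_a}$ for those $a$ is already irreducible in the relevant weight range, so $\iota_a$ is an isomorphism and imposes no kernel condition. Keeping track of which weight component is in play (the complex lives in weight $|\bs m|-2k$, but the $\n_-$-twisted summand sits in weight $|\bs m|-2k+2$) and matching signs through the skew-symmetrizations is the only delicate point; none of it is conceptually hard once Theorem \ref{thm skew main} and Lemmas \ref{lem skew on OS}, \ref{res lem} are in hand.
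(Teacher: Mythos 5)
Your proof is correct, and it is essentially the paper's argument (the paper's proof is the single line ``follows from Lemma~\ref{res lem}''), just unpacked through the dictionary of Theorem~\ref{thm skew main}. Both arguments hinge on the same key input — the explicit residue formula of Lemma~\ref{res lem} — and on the observation that the $\om_K$ with $k_j\geq m_j+1$ for some $j\leq n_0$ are precisely the ones that survive under $\res$, while the remaining $\om_K$ are precisely those with $\beta^K_{\bs m}\neq 0$. What you add is a genuinely useful reorganization: by routing through $\ga$, you convert the kernel computation into the already-recorded representation-theoretic fact that $\ker(\iota\colon A^0_\mm\to A^1_\mm)=\otimes_{a}L_{m_a}$, and then the identification with $\mc F^\bullet_-$ falls out of the explicit Shapovalov coefficients $\beta^K_{\bs m}$. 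Your remark about the indices $a>n_0$ is well taken and necessary (the map in the lemma runs only over $j\leq n_0$), though the phrasing ``$M_{m_a}$ is already irreducible in the relevant weight range'' is imprecise — what is really going on is that in weights $|\bs m|-2k$ and $|\bs m|-2k+2$ one has $k_a\leq k\leq m_a$ for $a>n_0$, so $\iota_a$ vanishes identically on those weight spaces and hence imposes no additional condition, exactly matching the absence of resonant dense edges $X_I^a$ for $a>n_0$. With that small correction, the argument stands and agrees with the paper's.
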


\begin{proof}
The lemma follows from Lemma  \ref{res lem}.
\end{proof}

\subsection{Cohomology $H^\bullet(X,\L_{\tilde\om_a})$ and intersection cohomology}
\label{Intersec}

Let $j : U\to \C^k$ be the canonical embedding. Let $\L_{\om_a}$ be the rank 1 local system on $U$
associated with the form $\om_a$, see Section \ref{Weighted discr arrangement}.
Consider the   intersection cohomology  $H^\bullet(\C^k, j_{!*}\L_{\om_a})$. By \cite{AV}, for generic nonzero real $\ka$,
the intersection cohomology
 $H^\bullet(\C^k, j_{!*}\L_{\om_a})$ is canonically isomorphic to the cohomology
  $H^\bullet(X, \L_{\tilde \om_a})$ if the following condition A from \cite{AV} is satisfied.

For $1\leq j\leq n_0$, consider $\C^{m_j}$ with coordinates $u_1,\dots, u_{m_j}$. Consider  the weighted
arrangement in $\C^{m_j}$ consisting of the hyperplanes $u_i=0, u_i-1=0$, $u_i-u_p=0$ with weights $-m_j/\ka$ for hyperplanes $u_i=0$
and weights $2/\ka$ for hyperplanes $u_i-1=0$ and $u_i-u_p=0$, c.f. Section \ref{associated weighted Orlik-Solomon}.
Denote by $U_j\subset \C^{m_j}$ the complement to the union
of hyperplanes of the arrangement. Let $\L_j$ be the rank 1 local system on $U_j$ associated with this weighted arrangement,
see Section \ref{Weighted discr arrangement}. The condition A is satisfied if for any $1\leq j\leq n_0$ we have
$H^\ell(U_j,\L_j)=0$ for $\ell > m_j$. Clearly in this situation condition A is satisfied  and
$H^\bullet(\C^k, j_{!*}\L_{\om_a})$ is canonically isomorphic to the cohomology
$H^\bullet(X, \L_{\tilde \om_a})$ by \cite{AV}. In particular, this implies that for    generic nonzero real $\ka$,
the skew-symmetric part $H^\bullet_-(\C^k, j_{!*}\L_{\om_a})$ on  the intersection cohomology  $H^\bullet(\C^k, j_{!*}\L_{\om_a})$
is isomorphic to the cohomology of the complex $(A^\bullet_-(X,\mc Z),\tilde\om_a+\res)$  and, hence,  to the cohomology of the complex
 $C_\bullet(\n_-,\otimes L_{m_j}) [|\bs m|-2k]$, see Section \ref{skEW Ao},
 c.f.
 \linebreak
 \cite[Section 6 of Introduction]{KV} and \cite[Corollary 6.11]{KV}.

\subsection{Remark} In the constructions of Section \ref{sec slT} we may assume
that  $\bs m=(m_1,\dots, m_n)$ is a vector with arbitrary complex coordinates
instead of being a vector with positive integer coordinates. Then all statements of Section \ref{sec slT}
hold. In particular, the same proofs show that in this more general situation the complex  $C_\bullet(\n_-,\otimes_{a=1}^nL_{m_a})[|\bs m|-2k]$
calculates the cohomology  $H^\bullet_-(X,\L_{\tilde\om_a})$ as well as the intersection cohomology $H^\bullet_-(\C^k, j_{!*}\L_{\om_a})$.


\bigskip

\end{document}